\theoremstyle{plain}
\newtheorem*{theorem*}{Theorem}
\newtheorem{thm}{Theorem}[section]
\newtheorem{lem}[thm]{Lemma}
\newtheorem{prop}[thm]{Proposition}
\newtheorem{cor}[thm]{Corollary}
\theoremstyle{definition}
\newtheorem{defn}[thm]{Definition}
\newtheorem{exmp}[thm]{Example}
\newtheorem{rem}[thm]{Remark}
\numberwithin{equation}{section}
\title{Toeplitz operators on Bergman spaces of polygonal domains}
\date{\vspace{-4ex}}
\begin{document}
\author{Paula Mannersalo}
\maketitle
\begin{abstract}
We study the boundedness of Toeplitz operators with locally integrable symbols on Bergman spaces $A^p(\Omega),$ $1<p<\infty,$ where $\Omega\subset \mathbb{C}$ is a bounded simply connected domain with \hyphenation{poly-gonal} polygonal boundary. We give sufficient conditions for the boundedness of
generalized Toeplitz operators in terms of ''averages'' of symbol over certain Cartesian squares.
We use the Whitney decomposition of $\Omega$ in the proof. We also give examples of bounded Toeplitz operators on $A^p(\Omega)$ in the case where polygon $\Omega$ has such a large corner that the Bergman projection is unbounded.
\\
\\
\textbf{Keywords:} Toeplitz operator, Bergman space, boundedness, polygonal domain, locally integrable symbol, Whitney decomposition, Schwarz-Christoffel formula
\\
\\
\textbf{AMS Subject Classification:} 47B35
\end{abstract}
\section{Introduction and notation}
We continue the study of generalized Toeplitz operators with locally integrable symbols on Bergman spaces $A^p(\Omega),$ $1<p<\infty$ (see \cite{mannersalo, taskinen1, taskinen2}). In \cite{taskinen1, taskinen2} it was considered the boundedness of \hyphenation{ge-neralized} generalized Toeplitz operators on Bergman spaces $A^p(\mathbb{D})$ of the unit disk. The result of \cite{taskinen1} was generalized in \cite{mannersalo} to the case of a bounded simply connected domain $\Omega\subset \mathbb{C}$ with $C^4$ smooth boundary.
In this paper we release the smoothness assumption on $\Omega$ and we consider polygonal domains of $\mathbb{C}$ with finite number of corners. Our main result, Theorem \ref{main1}, contains a weak sufficient condition for the boundedness of Toeplitz operators in such domains. The crucial difference compared to \cite{mannersalo} is that the derivative of the Riemann conformal mapping from $\Omega$ onto the unit disk is not always bounded. We also consider the apparently complicated question of finding bounded Toeplitz operators in the situation where the Bergman projection is unbounded.
We mainly follow the notation and terminology of \cite{mannersalo}, and the technical approach is similar to those of \cite{mannersalo, taskinen1, taskinen2}.


We start with the basic definitions and notation. 
By $C, C', C_1$ etc. we mean positive constants independent of given functions or indices, but which can vary from place to place.
Suppose $x$ and $y$ are two positive quantities. 
By writing $x \sim y,$ we mean that $x$ and $y$ are comparable i.e. that there exist two (absolute) constants $C_1,C_2>0$ such that $C_1x<y<C_2x.$
The unit disk in the complex plane is denoted by $\mathbb{D}$.
Let $\Omega\subset \mathbb{C}$ be a $n$-sided polygon, i.e., a bounded simply connected domain whose boundary consists of $n$ line segments and $n$ corners. More precisely, by a corner we mean inner angle $\alpha\pi,$ where $0<\alpha<2, \alpha\neq 1$. Let polygon $\Omega$ have inner angles $\alpha_1\pi,...,\alpha_n\pi$ at the corresponding vertices $w_1,...,w_n \in \partial\Omega$, where $n\geq 3$. We say that a corner is outward if $0<\alpha<1$ and inward if $1<\alpha<2$. We use the notation $\boldsymbol{\alpha_m}:=\text{max}_k(\alpha_k)$ for the factor of the maximum angle (corner), and the corresponding vertex is denoted by $\boldsymbol{w_m}\in \partial\Omega$.
Let $\varphi:\Omega\rightarrow \mathbb{D}$ be a Riemann conformal mapping. Its inverse is denoted by $\psi=\varphi^{-1}:\mathbb{D}\rightarrow\Omega.$  According to the Schwarz-Christoffel formula
\begin{equation}
\label{schwarz0}
\psi(z)=A\int_{0}^{z}\prod_{k=1}^{n}(1-\overline{z}_kz)^{\alpha_k-1}+B,\;\;\;\;z\in \mathbb{D},
\end{equation}
where $z_k\in \partial \mathbb{D}$, $\psi(z_k)=w_k$, $\Sigma_{k=1}^n\alpha_{k}=n-2$ and $A$ and $B$ are constants which determine the size and the position of $\Omega$. The formula \eqref{schwarz0} is not unique: when the vertices $w_1,...,w_n\in \partial \Omega$ are given, three of the points $z_k \in \partial\mathbb{D}$ can be chosen arbitrarily. For details about the Schwarz-Christoffel formula we refer to \cite[p.189--196]{nehari}.
It follows from \eqref{schwarz0} that
\begin{equation}
\label{schwarz1}
\psi'(z)=A\prod_{k=1}^{n}(1-\overline{z}_kz)^{\alpha_k-1},\;\;\;z\in \mathbb{D},\;\;A\neq 0,
\end{equation}
and
\begin{equation}
\label{schwarz2}
\varphi'(w)=A^{-1}\prod_{k=1}^{n}(1-\overline{\varphi(w_k)}\varphi(w))^{1-\alpha_k},\;\;\;w\in \Omega,
\end{equation}
where $w_k=\psi(z_k)\in \partial\Omega$ are the vertices. 
Euclidean disk with center $z$ and radius $r>0$ is denoted by $B(z,r)$.
The Bergman space $A^p(\Omega)$ $(1< p <\infty)$ consists of analytic functions $f:\Omega\rightarrow \mathbb{C}$ satisfying
\[
\left\Vert f\right\Vert_p:=\left(\int_{\Omega}|f(w)|^pdA(w)\right)^{1/p}<\infty,
\]
where $dA(w):=\pi^{-1}dxdy$ ($w=x+iy$) is the scaled area measure in the plane.
If there is a chance for confusion, we denote by $\left\Vert f\right\Vert_{p,\Omega}$ the norm on $\Omega$ and by $\left\Vert f\right\Vert_{p,\mathbb{D}}$ the norm on $\mathbb{D}.$ By $A_{\omega}^p(\Omega)$ we mean weighted Bergman space equipped with the norm $\left\Vert f\right\Vert_{p,\omega}=\int_{\Omega}|f|^p\omega dA$, where $\omega:\Omega\rightarrow \mathbb{R_{+}}$ is a positive real-valued weight. The index $p$ of the Bergman space $A^p(\Omega)$ is $1<p<\infty$ throughout the paper.\par
Let $P_{\Omega}$ be the Bergman projection, i.e., the orthogonal projection from $L^2(\Omega)$ onto $A^2(\Omega)$. It has the integral representation
$(P_{\Omega}f)(z)=\int_{\Omega}K_\Omega(z,w)f(w)dA(w),$
where 
\begin{equation}
\label{kernel}
K_\Omega(z,w)=\frac{\varphi'(z)\overline{\varphi'(w)}}{(1-\varphi(z)\overline{\varphi(w)})^2}
\end{equation} is the Bergman kernel of $\Omega$
(see \cite{duren, bekolle2, hedenmalm, shikhvatov}).
To simplify notation we denote $G_{\Omega}(z,w):=1-\varphi(z)\overline{\varphi(w)}$.
In particular, the Bergman projection of the unit disk, $P_{\mathbb{D}}:L^2(\mathbb{D}) \rightarrow A^2(\mathbb{D})$, can be expressed as
\begin{equation}
\label{P_unitdisk}
(P_{\mathbb{D}}f)(z)=\int_{\mathbb{D}}\frac{f(w)dA(w)}{(1-z\overline{w})^2}.
\end{equation}
The classical Toeplitz operator $T_a$ on the Bergman space $A^p(\Omega)$ is defined by
$T_{a}(f)=P_{\Omega}(af),$ if $a:\Omega\rightarrow \mathbb{C}$, the symbol of $T_a$, is such that the following integral converges:
\begin{equation}
\label{Tomega}
(T_{a}f)(z)=\int_{\Omega}K_{\Omega}(z,w)a(w)f(w)dA(w),\;\;\;f\in A^p(\Omega).
\end{equation}
In this article symbol $a:\Omega\rightarrow \mathbb{C}$ is always at least locally integrable, i.e., it belongs to $L_{loc}^1(\Omega).$
We need also the maximal Bergman projection $P^+_{\Omega}$ on $L^p(\Omega)$ defined as
\begin{equation}
\label{maximalbergman}
(P_{\Omega}^{+}f)(z):=\int_{\Omega}|K_{\Omega}(z,w)||f(w)|dA(w),
\end{equation}
if the integral converges.

On several occasions we deal with the boundary distance weight $v(w):=\text{dist}(w,\partial\Omega):=\inf_{z\in \partial \Omega}|w-z|$, $w\in \Omega.$ It follows from the Koebe distortion theorem \cite[Corollary 1.4]{pommerenke} that
\begin{equation}
\label{weight}
v(w)=\text{dist}(w,\partial\Omega)\sim \frac{1-|\varphi(w)|^2}{|\varphi'(w)|},\;\;w\in\Omega.
\end{equation}

For $u+iv\in \Omega,$ we denote Cartesian squares by
\[
S:=S(u+iv,\rho):=\{x+iy\in \Omega\,|\, u\leq x \leq u+\rho,\, v\leq y\leq v+\rho\},
\]
where $\rho>0$ is the side length of $S$. Here $\rho$ is always so small that $S(u+iv,\rho)\subset \Omega.$ The area of $S$ is denoted by $|S|:=\rho^2$, and
the diameter of $S$ is $\text{diam}S:=\sup_{w_1,w_2\in S}|w_1-w_2|=\sqrt{2}\rho.$ The distance of $S$ to the boundary $\partial \Omega$ is denoted by $\text{dist}(S,\partial\Omega):=\inf_{w\in S,z\in \partial \Omega}|w-z|.$
We now form a partition of $\Omega\subset \mathbb{C}$ using Whitney's decomposition:
There exist $z_{n}:=x_{n}+iy_{n}\in \Omega$ 
and $\rho_n>0$ for all $n\in\mathbb{Z_{+}}$ such that the squares 
\begin{equation}
	\label{Sn}
	S_{n}:=\{x+iy\,|\, x_{n}\leq x \leq x_{n}+\rho_{n},\, y_{n}\leq y \leq y_{n}+\rho_{n}\} \subset \Omega
\end{equation}
form a partition of the domain $\Omega$ and  
$\text{diam}S_n\sim \text{dist}(S_n,\partial\Omega)$ for all $n.$
In addition to $S_n$ we need a little bit larger squares $\tilde{S}_n\supset S_n(x_n+iy_n,\rho_n)$ with side lengths $\frac{11}{10}\rho_n,$
\begin{equation}
	\label{tildeS}
	\tilde{S}_{n}:=\{x+iy\,|\, x_{n}-\frac{1}{20}\rho_n\leq x \leq x_{n}+\frac{21}{20}\rho_{n},\, y_{n}-\frac{1}{20}\rho_n\leq y \leq y_{n}+\frac{21}{20}\rho_{n}\}.
\end{equation}
More specifically, the Whitney decomposition has the following properties (see \cite{stein}). 
\begin{lem}\emph{\cite{stein}}
	\label{whitney}
	Let $\Omega\subset \mathbb{C}$ be an open non-empty set. 
	There exist squares \newline $S_n:=S_n(x_n+iy_n,\rho_n)\subset\Omega$ $(n=1,2,3...)$ such that
	\begin{enumerate}[$(a)$]
		\item $\Omega=\bigcup\limits_{n}S_n$,
		\item $S_n$ are mutually disjoint,
		\item 
		$\mathrm{diam}S_n \leq \mathrm{dist}(S_n,\partial\Omega)  \leq 4  \,
		\mathrm{diam}S_n
		$ and since $\mathrm{diam}S_n=\sqrt{2}\rho_n$,	
		\[\label{rho}
		\sqrt{2}\rho_n\leq \mathrm{dist}(S_n,\partial\Omega)  \leq 4 \sqrt{2}\rho_n,\]
		\item each point of $\Omega$ is contained in at most $144$ of the squares $\tilde{S}_n.$
	\end{enumerate}
\end{lem}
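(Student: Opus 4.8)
The statement is the classical Whitney decomposition of an open set, and the plan is to carry out the standard dyadic construction — which yields $(a)$--$(c)$ directly — and then to establish the finite-overlap property $(d)$, the only part that is sensitive to the particular enlargement factor $\tfrac{11}{10}$ fixed in \eqref{tildeS}.

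\emph{Construction and properties $(a)$--$(c)$.} For $j\in\mathbb Z$ let $\mathcal D_j$ denote the mesh of closed dyadic squares of side $2^{-j}$, and split $\Omega$ into the layers
\[
\Omega_j:=\bigl\{w\in\Omega:\ 2\sqrt2\cdot 2^{-j}<\text{dist}(w,\partial\Omega)\le 2\sqrt2\cdot 2^{-j+1}\bigr\},
\]
so that $\Omega=\bigcup_{j\in\mathbb Z}\Omega_j$. Let $\mathcal F_0$ be the family of all $Q\in\mathcal D_j$ (over all $j$) with $Q\cap\Omega_j\neq\emptyset$. Since $\text{diam}\,Q=\sqrt2\cdot 2^{-j}$ when $Q\in\mathcal D_j$, the triangle inequality for $w\mapsto\text{dist}(w,\partial\Omega)$ immediately gives
\[
\text{diam}\,Q\le\text{dist}(Q,\partial\Omega)\le 4\,\text{diam}\,Q,\qquad Q\in\mathcal F_0,
\]
so in particular each such $Q$ lies in $\Omega$, and $\bigcup_{Q\in\mathcal F_0}Q=\Omega$. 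The squares of $\mathcal F_0$ may be nested, but the bound above forces two nested ones to differ by at most two dyadic generations, so each $Q\in\mathcal F_0$ is contained in a unique maximal member of $\mathcal F_0$; collecting these maximal squares and relabelling them $\{S_n\}_{n\ge1}$, with side lengths $\rho_n$, produces a family that still covers $\Omega$ and still obeys the displayed distance bound, i.e.\ $(a)$ and $(c)$, and, being pairwise non-nested dyadic squares, has mutually disjoint interiors, which is $(b)$.

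\emph{The overlap bound $(d)$.} First I would prove the comparability statement: if $\tilde S_j\cap\tilde S_k\neq\emptyset$ then $\tfrac14\rho_k\le\rho_j\le4\rho_k$. By \eqref{tildeS} every point of $\tilde S_n$ lies within $\tfrac{\sqrt2}{20}\rho_n$ of $S_n$, so a common point of $\tilde S_j$ and $\tilde S_k$ yields $\text{dist}(S_j,S_k)\le\tfrac{\sqrt2}{20}(\rho_j+\rho_k)$; taking nearest points $x_j\in S_j$, $x_k\in S_k$ and chaining $\text{dist}(S_j,\partial\Omega)\le\text{dist}(x_j,\partial\Omega)\le|x_j-x_k|+\text{dist}(x_k,\partial\Omega)$ with $(c)$ applied to both squares gives $\sqrt2\,\rho_j\le\tfrac{\sqrt2}{20}(\rho_j+\rho_k)+5\sqrt2\,\rho_k$, hence $\rho_j<6\rho_k$; since side lengths are powers of $2$ this means $\rho_j/\rho_k\in\{\tfrac14,\tfrac12,1,2,4\}$, and symmetry completes the claim. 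Now fix $w\in\Omega$ and let $\mathcal I:=\{n:\ w\in\tilde S_n\}$. The enlarged squares indexed by $\mathcal I$ all contain $w$, so by comparability their side lengths differ by a factor at most $4$ and, being powers of $2$, take at most three distinct values. For a fixed value $\rho$, the $S_n$ with $n\in\mathcal I$ and side $\rho$ are pairwise disjoint, belong to the single mesh $\mathcal D_j$ with $2^{-j}=\rho$, and lie within $\tfrac{\sqrt2}{20}\rho<\rho$ of $w$, hence each is among the nine squares of $\mathcal D_j$ forming the $3\times3$ block around the square of $\mathcal D_j$ that contains $w$. Therefore $\mathcal I$ has at most $3\cdot 9=27\le144$ elements, which is $(d)$ (in particular the dimension-generic constant $144=12^2$ of \cite{stein} holds).

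\emph{Where the work is.} Everything apart from the comparability statement and the closing count is the textbook construction. The one point needing care is to check that the enlargement factor $\tfrac{11}{10}$ of \eqref{tildeS} is small enough, relative to the factor $4$ in $(c)$, to force overlapping enlarged squares to have comparable side lengths with a clean dyadic conclusion — it is, with room to spare — after which the overlap bound reduces to an elementary packing estimate for dyadic squares.
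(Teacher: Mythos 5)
Your construction is correct and is essentially the standard Whitney decomposition argument from Stein that the paper simply cites rather than proves: dyadic squares meeting the layer at the matching scale, passing to maximal ones for $(a)$--$(c)$, and the comparability-of-scales plus packing count for $(d)$. The only part specific to this paper is verifying $(d)$ for the $\tfrac{11}{10}$-enlargement of \eqref{tildeS}, which you do correctly (in fact your count of $27$ is sharper than the stated bound $144=12^2$, the dimension-generic constant valid for any expansion factor below $5/4$).
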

In the definition of the generalized Toeplitz operator we use the partial sum operator $T_{a,\Omega}^{(m)}$:
\begin{equation}
\label{Tm}
T_{a,\Omega}^{(m)}f(z):=\sum_{n=1}^{m}\int_{S_n}K_{\Omega}(z,w)a(w)f(w)dA(w)
\end{equation} for all integers $m\geq 1$.
In order to avoid confusion, we always denote by $T_{a}$ the usual Toeplitz operator
\eqref{Tomega} and by $T_{a,\Omega}$ the generalized Toeplitz operator defined as follows.
\begin{defn}
	Let $\{S_n\}_n$ be a Whitney decomposition of $\Omega$ and let $a\in L_{loc}^1(\Omega).$ 
	Given $f\in A^p(\Omega)$,
	we define the generalized Toeplitz operator
	\begin{equation}
	\label{generalizedT}
	(T_{a,\Omega}f)(z):=\lim_{m\rightarrow\infty}T_{a,\Omega}^{(m)}f(z),\;\;\;z\in\Omega,
	\end{equation}
	if the limit exists for all $z\in \Omega$.
\end{defn}
\begin{rem}
	\label{coincide}
 If $f\in A^p(\Omega)$ is such that $\varphi'af\in L^1(\Omega)$, then $T_af=T_{a,\Omega}f$, i.e., the usual \eqref{Tomega} and the generalized \eqref{generalizedT} definitions of the Toeplitz operator coincide. Indeed, if $z\in \Omega$ and $\mbox{\large$\chi$}_{S_n}$ is the characteristic function of $S_n$, then
\begin{equation}
\nonumber
|(T_af)(z)|\leq\int_{\Omega}|K_{\Omega}(z,w)||a(w)||f(w)|dA(w)\leq C_z\int_{\Omega}|\overline{\varphi'(w)}||a(w)||f(w)|dA(w)< \infty.
\end{equation}
Therefore, due to the Lebesgue's dominated  convergence theorem, we have
\begin{eqnarray}
\nonumber
	(T_af)(z)&=&\int_{\Omega}K_{\Omega}(z,w)a(w)f(w)dA(w)=\lim_{m\rightarrow \infty}\int_{\Omega}\big(\sum_{n=1}^m\mbox{\large$\chi$}_{S_n}(w)\big)K_{\Omega}(z,w)a(w)f(w)dA(w)
	\\\nonumber &=&\lim_{m\rightarrow \infty}\sum_{n=1}^m\int_{\Omega}\mbox{\large$\chi$}_{S_n}(w)K_{\Omega}(z,w)a(w)f(w)dA(w)=(T_{a,\Omega}f)(z).
\end{eqnarray}
\end{rem}
We recall that if 
$T$ and $T_n$ (for all $n=1,2,...$) are bounded linear operators $A^p(\Omega)\rightarrow A^p(\Omega),$ the sequence of operators $(T_n)$ is said to converge strongly (in the strong operator topology, SOT) to $T$ iff $\left\Vert T_nf-Tf\right\Vert_p\rightarrow 0$ as $n\rightarrow \infty$ for all $f\in A^p(\Omega)$ (see for example \cite[p.83]{hutson}).
The next definition is needed in the condition for the symbol $a$.
\begin{defn}
	Given $S=S(u+iv,\rho)\subset \Omega$ and $z'=x'+iy' \in S,$ we write
	\[
	\hat{a}_S(z'):=\frac{1}{|S|}\int_{v}^{y'}\int_{u}^{x'}a(x+iy)dxdy.
	\]
	Note that
	if $z'=u+\rho+i(v+\rho),$ then $\hat{a}_S(z')$ is the average of $a$ over square $S(u+iv,\rho).$
\end{defn}
\noindent For a symbol $a\in L_{loc}^1(\Omega)$
we make always the assumption: There exists a constant $\mathbf{C}>0$ such that
\begin{equation}
	\label{symbol}
	|\hat{a}_S(z')|=\frac{1}{|S|}\left|\int_{v}^{y'}\int_{u}^{x'}a(x+iy)dxdy\right|\leq \mathbf{C}
\end{equation}
for all $z'=x'+iy'\in S:=S(u+iv, \rho)$ and for all squares $S\subset\Omega$ which have the property $\sqrt{2}\rho\leq\text{dist}(S,\partial\Omega)\leq 4\sqrt{2} \rho$. 

Recall that $\boldsymbol{\alpha_m}=\max_k({\alpha_k})$ and that $0<\alpha_k<2,$ $\alpha_k\neq 1,$ for all $k$. We state here the main result.
\begin{thm}
	\label{main1} Let $1<p<\infty$ and let $\Omega\subset \mathbb{C}$ be a polygon with corners $\alpha_1\pi,...,\alpha_n\pi$ at vertices $w_1,...,w_n\in \partial\Omega$. Suppose that $\boldsymbol{\alpha_m}<1+\frac{2}{p-2}$ if $p> 4$ and $\boldsymbol{\alpha_m}<1+\frac{2(p-1)}{2-p}$ if $1<p< 4/3$. Let $a \in L_{loc}^1(\Omega)$ and assume that symbol a satisfies the condition \eqref{symbol}. Then the generalized Toeplitz operator $T_{a,\Omega}$, defined as
	\begin{equation}
		\label{generalizedT2}
		(T_{a,\Omega}f)(z)=\lim_{m\rightarrow \infty}T^{(m)}_{a,\Omega}f(z)=\sum_{n=1}^{\infty}\int_{S_n}K_{\Omega}(z,w)a(w)f(w)dA(w),
	\end{equation}
	is a bounded operator from $A^p(\Omega)$ into $A^p(\Omega)$ and the sum in \eqref{generalizedT2} converges pointwise, absolutely for all $z \in \Omega.$ Moreover,  $T_{a,\Omega}^{(m)}\rightarrow 	T_{a,\Omega}$ strongly,  as $m\rightarrow \infty$.
\end{thm}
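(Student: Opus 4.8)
\emph{Overall strategy.} Fix $f\in A^p(\Omega)$ and $z\in\Omega$. On each Whitney square $S_n$ I will move the symbol off $f$ and onto derivatives of the Bergman kernel by integration by parts (using the Definition of $\hat a_{S_n}$ together with \eqref{symbol}), then sum the per‑square estimates and reduce the whole matter to the boundedness of the maximal Bergman projection $P^+_\Omega$ on $L^p(\Omega)$ -- which turns out to hold exactly under the stated hypotheses. Concretely, on $S_n=[x_n,x_n+\rho_n]\times[y_n,y_n+\rho_n]$ one has $a=\rho_n^2\,\partial_x\partial_y\hat a_{S_n}$; integrating $\int_{S_n}K_\Omega(z,w)a(w)f(w)\,dA(w)$ by parts twice (after a routine mollification of $a\in L^1_{\mathrm{loc}}$) and using that $\hat a_{S_n}$ and $\partial_x\hat a_{S_n}$ vanish on the lower and left edges of $S_n$, one is left with a value at the corner $c_n=x_n+\rho_n+i(y_n+\rho_n)$, two line integrals over the upper and right edges, and one area integral over $S_n$, in each of which $a$ enters only through $\hat a_{S_n}$, with $|\hat a_{S_n}|\le\mathbf C$ by \eqref{symbol} since $S_n$ satisfies the requirement of Lemma~\ref{whitney}$(c)$. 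Writing $g=K_\Omega(z,\cdot)f$ (so $\partial_x g,\partial_y g=O(|K_\Omega(z,\cdot)|\,|f'|+|f|\,|\partial_{\bar w}K_\Omega|)$ and $\partial_x\partial_y g=O(|K_\Omega(z,\cdot)|\,|f''|+|f|\,|\partial_{\bar w}^2K_\Omega|)$), I estimate each of the four terms by three standard facts valid on $S_n$ and a fixed enlargement $S_n^\sharp\subset\Omega$: that $|K_\Omega(z,w)|\sim|K_\Omega(z,\zeta_n)|$ uniformly in $z$ on $S_n^\sharp$ ($\zeta_n$ the centre of $S_n$; from comparability of $|1-\varphi(z)\overline{\varphi(\cdot)}|$ over a hyperbolic ball plus Koebe distortion for $\varphi'$); that $|\partial_{\bar w}^{\,j}K_\Omega(z,w)|\lesssim v(w)^{-j}|K_\Omega(z,w)|\sim\rho_n^{-j}|K_\Omega(z,\zeta_n)|$ for $j=1,2$ (from \eqref{kernel}, \eqref{schwarz1}--\eqref{schwarz2} and \eqref{weight}, since $\varphi''/\varphi'$ is a finite sum of terms $(1-\alpha_k)\overline{\varphi(w_k)}\varphi'/(1-\overline{\varphi(w_k)}\varphi)$, each $O(1/v)$); and that $\sup_{S_n}|f^{(j)}|\lesssim\rho_n^{-j-2/p}\|f\|_{L^p(S_n^\sharp)}$ (Cauchy estimates and subharmonicity of $|f|^p$). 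Collecting powers of $\rho_n$, all four terms are $\lesssim\mathbf C\,\rho_n^{2-2/p}|K_\Omega(z,\zeta_n)|\,\|f\|_{L^p(S_n^\sharp)}$, which rewrites as
\[
\Bigl|\int_{S_n}K_\Omega(z,w)a(w)f(w)\,dA(w)\Bigr|\ \lesssim\ \mathbf C\int_{S_n}|K_\Omega(z,w)|\,F(w)\,dA(w),\qquad F(w):=\Bigl(\tfrac1{|S_n^\sharp|}\int_{S_n^\sharp}|f|^p\,dA\Bigr)^{1/p}\ \text{for }w\in S_n.
\]

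\emph{Summation.} Since the $S_n$ are disjoint with $\bigcup_nS_n=\Omega$, summing this over $n\le m$, over all $n$, and over $n>m$ respectively yields $|T^{(m)}_{a,\Omega}f(z)|\le C\mathbf C\,P^+_\Omega(F)(z)$, $|T_{a,\Omega}f(z)|\le C\mathbf C\,P^+_\Omega(F)(z)$, and $|T_{a,\Omega}f(z)-T^{(m)}_{a,\Omega}f(z)|\le C\mathbf C\,P^+_\Omega(F_{>m})(z)$, where $F_{>m}:=F\,\chi_{\Omega\setminus\bigcup_{n\le m}S_n}$. A bounded‑overlap count for the enlargements $S_n^\sharp$ (of the type of Lemma~\ref{whitney}$(d)$) gives $\|F\|_p\le C\|f\|_p$, while $0\le F_{>m}\le F$ and $F_{>m}\to0$ pointwise. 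Everything is thus reduced to showing $P^+_\Omega$ is bounded on $L^p(\Omega)$.

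\emph{Boundedness of $P^+_\Omega$ -- the crux.} The change of variables $z=\psi(\xi)$, $w=\psi(\zeta)$ with \eqref{kernel} and \eqref{schwarz1} gives, for $g\ge0$, that $\int_\Omega|K_\Omega(z,w)|g(w)\,dA(w)=|\psi'(\xi)|^{-1}P^+_\mathbb D\bigl[(g\circ\psi)|\psi'|\bigr](\xi)$, hence
\[
\|P^+_\Omega g\|_{L^p(\Omega)}=\|P^+_\mathbb D G\|_{L^p(\mathbb D,\,\omega)},\qquad G=(g\circ\psi)|\psi'|,\quad \omega=|\psi'|^{\,2-p},
\]
and $g\mapsto G$ is an isometry of $L^p(\Omega)$ onto $L^p(\mathbb D,\omega)$; so $\|P^+_\Omega\|_{L^p(\Omega)\to L^p(\Omega)}=\|P^+_\mathbb D\|_{L^p(\mathbb D,\omega)\to L^p(\mathbb D,\omega)}$. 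The latter is finite precisely when $\omega\in B_p$ (Bekoll\'e--Bonami); and since $|\psi'(z)|=|A|\prod_k|1-\overline z_kz|^{\alpha_k-1}$ by \eqref{schwarz1}, $\omega$ is bounded above and below off the vertices and behaves like $|1-\overline z_kz|^{(2-p)(\alpha_k-1)}$ near $z_k$, so testing $B_p$ on Carleson tents based at the $z_k$ reduces to the requirements $-2<(2-p)(\alpha_k-1)<2(p-1)$ for every $k$; the only one of these not automatic from $0<\alpha_k<2$ involves the largest angle and reads $\boldsymbol{\alpha_m}<1+\tfrac2{p-2}$ when $p>4$ and $\boldsymbol{\alpha_m}<1+\tfrac{2(p-1)}{2-p}$ when $1<p<\tfrac43$ -- exactly the hypotheses, while for $\tfrac43\le p\le4$ no condition is needed (recovering the situation of \cite{mannersalo}, where $|\psi'|\sim1$). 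Hence $C_{p,\Omega}:=\|P^+_\Omega\|_{L^p(\Omega)\to L^p(\Omega)}<\infty$. (Equivalently one can bypass Bekoll\'e--Bonami by a direct Schur test on $\Omega$ with test function $v(w)^{-s}\prod_k|1-\overline{\varphi(w_k)}\varphi(w)|^{t_k}$, whose admissible exponents produce the same thresholds.)

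\emph{Conclusion and main obstacle.} Combining, $\|T^{(m)}_{a,\Omega}f\|_p\le C\mathbf C\,C_{p,\Omega}\|f\|_p$ uniformly in $m$ and $\|T_{a,\Omega}f\|_p\le C\mathbf C\,C_{p,\Omega}\|f\|_p$; the sum \eqref{generalizedT2} converges absolutely at every $z\in\Omega$ because its tail is $\le C\mathbf C\,P^+_\Omega(F)(z)$, finite since for fixed $z$ the integrand is $\le C_z|\varphi'(w)|F(w)$ with $\varphi'\in L^{p'}(\Omega)$ (itself a consequence of the $\boldsymbol{\alpha_m}$‑hypothesis) and $F\in L^p(\Omega)$. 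Each $T^{(m)}_{a,\Omega}f$ is holomorphic, the tails go to $0$ uniformly on compacta (dominated convergence, $|\varphi'|F\in L^1(\Omega)$), so the pointwise limit $T_{a,\Omega}f$ is holomorphic; being in $L^p(\Omega)$ it lies in $A^p(\Omega)$, so $T_{a,\Omega}\colon A^p(\Omega)\to A^p(\Omega)$ is bounded. Finally $\|T^{(m)}_{a,\Omega}f-T_{a,\Omega}f\|_p\le C\mathbf C\,C_{p,\Omega}\|F_{>m}\|_p\to0$ by dominated convergence, i.e.\ $T^{(m)}_{a,\Omega}\to T_{a,\Omega}$ strongly. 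I expect the decisive step to be the boundedness of $P^+_\Omega$ -- the genuinely new point compared with the $C^4$ case of \cite{mannersalo}, since there $\psi'$ is bounded above and below and $P^+_\Omega$ is $L^p$‑bounded for all $1<p<\infty$; here the degeneration of $\psi'$ (equivalently the blow‑up of $\varphi'$) at the corners must be controlled, and the sharp exponents in the hypothesis are forced by the test at the maximal angle. A secondary technical point is the corner kernel bound $|\partial_{\bar w}^{\,j}K_\Omega|\lesssim v^{-j}|K_\Omega|$ used in the per‑square estimate.
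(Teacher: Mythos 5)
Your proposal is correct and follows the same overall architecture as the paper: Whitney decomposition, double integration by parts on each square so that $a$ enters only through $\hat a_{S_n}$ (your four terms are exactly the paper's $F_{1,n},\dots,F_{4,n}$), domination of the partial sums by the maximal Bergman projection, and reduction of the boundedness of $P^+_{\Omega}$ to the Bekoll\'e--Bonami condition for the weight $|\psi'|^{2-p}$ on the disk (the paper's Theorem \ref{Regulated}), which is precisely where the angle hypotheses enter. The one genuine difference is the per-square estimate. The paper applies the mean value property to the analytic functions $\overline{\varphi'(z)}\varphi' f/(1-\overline{\varphi(z)}\varphi)^2$ and their derivatives, obtaining $|F_nf(z)|\lesssim\int_{\tilde S_n}|K_{\Omega}(z,w)|\bigl(|f|+v|f'|+v^2|f''|\bigr)dA(w)$, and then needs the global weighted norm inequalities $\left\Vert vf'\right\Vert_p\lesssim\left\Vert f\right\Vert_p$ and $\left\Vert v^2f''\right\Vert_p\lesssim\left\Vert f\right\Vert_p$ (Lemma \ref{corderivative}, proved via Zhu's Lemma \ref{norms} and the corner estimates for $\psi''$, $\psi'''$). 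You instead absorb the derivative terms into a single locally averaged majorant $F$ with $F|_{S_n}=\bigl(|S_n^\sharp|^{-1}\int_{S_n^\sharp}|f|^p\,dA\bigr)^{1/p}$ via Cauchy estimates and subharmonicity of $|f|^p$, and control $\left\Vert F\right\Vert_p$ by bounded overlap; this bypasses Lemma \ref{corderivative} entirely. Note, however, that your kernel bounds $|\partial_{\bar w}^{\,j}K_{\Omega}|\lesssim v^{-j}|K_{\Omega}|$ still rest on the same Schwarz--Christoffel corner estimates $\rho_n|\varphi''|\lesssim|\varphi'|$ and $\rho_n^2|\varphi'''|\lesssim|\varphi'|$ (the paper's \eqref{rho_5}), so the analysis of the higher derivatives near the vertices is not avoided, only relocated; you additionally need the (standard but unproved-in-the-paper) comparability $|K_{\Omega}(z,w)|\sim|K_{\Omega}(z,\zeta_n)|$ for $w\in S_n^\sharp$ uniformly in $z$. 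Both routes are valid; yours is slightly more self-contained on the function side, while the paper's form of the per-square bound is reused verbatim in Propositions \ref{main2} and \ref{main3}.
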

\begin{rem}
	Note that if $4/3\leq p \leq 4$, then there is no restriction for the maximum angle. The assumptions in Theorem \ref{main1} restrict only the maximum size of inward corners if $p>4$ or $1<p<4/3$ and they are based on the requirement of the Bergman projection to be bounded (see Theorem \ref{Regulated}).
\end{rem}
The article is organised as follows. In Section 2 we recall and reformulate some necessary estimates introduced in our predecessor \cite{mannersalo}. We also state some useful results concerning Bergman spaces and Bergman projections. 
The main result, Theorem \ref{main1}, is proved in Section 3. In section 4 we show a boundedness result in a weighted Bergman space $A_w^p(\Omega)$ in the case $p>4$ (Proposition \ref{main2}). We also deal with the case $1<p<4/3$ by setting a stronger condition for the symbol (Proposition \ref{main3}).
In Section 5 we consider the classical Toeplitz operator acting on Bergman space $A^p(\Omega)$  ($p>4$ or $1<p< 4/3$) in the case when the Bergman projection is unbounded. We show by examples that there are some difficulties in finding bounded Toeplitz operators in this special case.

\section{Preliminaries}
We  start this section with estimates for derivatives $\varphi^{(n)}$ ($n=1,2,3$) when $\varphi'$ is as in \eqref{schwarz2}. 
Let $\Omega$ be a polygon with vertices $w_1,...,w_n \in \partial \Omega$ and corners $\alpha_1\pi,...,\alpha_n\pi$. Let $\varphi:\Omega\rightarrow \mathbb{D}$ be a Riemann conformal mapping.
By Cauchy-Riemann equations ($w=x+iy$) we have
\begin{equation}
\label{partialder}
\partial_x{\overline{\varphi'(w)}}=\overline{\varphi''(w)}\;\;\;
\partial_y{\overline{\varphi'(w)}}=-i\overline{\varphi''(w)}\;\;\;
\partial_y\partial_x\overline{\varphi'(w)}
=-i\overline{\varphi'''(w)}.
\end{equation}
An elementary calculation leads to
\begin{align}
\label{derivatives2}
\nonumber
&\varphi'(w)= A^{-1}\prod_{k=1}^{n}G_{\Omega}(w, w_k)^{1-\alpha_k},\\
&\nonumber\varphi''(w)= A^{-2}\sum_{k=1}^n\prod_{\substack{j=1\\j\neq k}}^n(\alpha_k-1)\overline{\varphi(w_k)}G_{\Omega}(w,w_k)^{1-2\alpha_k}G_{\Omega}(w,w_j)^{2(1-\alpha_j)},\\
&\nonumber\varphi'''(w)= A^{-3}\sum_{k=1}^n\Big(\prod_{\substack{j=1\\j\neq k}}^nc_kG_{\Omega}(w,w_k)^{1-3\alpha_k}G_{\Omega}(w,w_j)^{3(1-\alpha_j)}\\
&\quad \quad \quad \quad +\sum_{\substack{j=1\\j\neq k}}^n\prod_{i\neq k,j}c_{k,j}G_{\Omega}(w,w_k)^{2-3\alpha_k}G_{\Omega}(w,w_j)^{2-3\alpha_j}G_{\Omega}(w,w_i)^{3(1-\alpha_i)}\Big),
\end{align}
where $G_{\Omega}(w,w_k)=1-\varphi(w)\overline{\varphi(w_k)}$, $c_k=(1-\alpha_k)(1-2\alpha_k)(\overline{\varphi(w_k)})^2$ and $c_{k,j}=2(1-\alpha_k)(1-\alpha_j)\overline{\varphi(w_k)}\overline{\varphi(w_j)}$.
To simplify evaluations including $|\varphi^{(n)}|$ we make the following observations.
Let $r=\min_{j,k}|w_j-w_k|/10>0$. We denote by $B(w_k,r)$ the disk with a center $w_k\in \partial \Omega$ and the radius $r$.
Let $\{\tilde{S_i}\}_i$ be as defined in \eqref{tildeS}, so $\Omega=\cup_{i} \tilde{S_i}$. The mapping $\varphi:\Omega\rightarrow \mathbb{D}$ has a homeomorphic extension $\varphi:\overline{\Omega}\rightarrow\overline{\mathbb{D}}$, since the boundary of $\Omega$ is obviously locally connected. 
Hence, there exist constants $C_m>0$ ($m=1,2,3$) such that 
\begin{equation}
\label{intersection}
C^{-1}_m<|\varphi^{(m)}(w)|< C_m\;\;\;\text{for all}\;\;\; w\in \bigcup_i\left\{\tilde{S_i}\;|\;\tilde{S_i}\subset \Omega\setminus\cup_{k=1}^nB(w_k,r)\right\},\;\;m=1,2,3.
\end{equation}
Let us fix $k$. Then for all $w\in \cup_i\{\tilde{S_i}\;|\;\tilde{S_i}\cap B(w_k,r)\neq \emptyset\}$
\begin{equation}
\label{intersection2}
|\varphi'(w)|\sim |G_{\Omega}(w,w_k)|^{1-\alpha_k},\;\;
|\varphi''(w)|\sim  |G_{\Omega}(w,w_k)|^{1-2\alpha_k},\;\;
|\varphi'''(w)|\sim  |G_{\Omega}(w,w_k)|^{1-3\alpha_k}.
\end{equation}
Let us make sure that the estimates in \eqref{intersection2} hold. If $\tilde{S_i}\cap B(w_k,r)\neq \emptyset$, then
for all $w\in \tilde{S_i}$
\[
|w-w_j|\geq \left||w-w_k|-|w_k-w_j|\right|\geq \left|41/19r-10r\right|=149/19r\;\;\;\text{for all}\;\;\;j\neq k,
\] 
where $|w_k-w_j|\geq 10r$ and $|w-w_k|\leq 11/10\sqrt{2}\rho_i+r\leq 11/10\text{dist}(S_i,\partial \Omega)+r\leq41/19 r$ (see Lemma \ref{whitney} (c) and \eqref{tildeS}). 
Therefore, since 
$\varphi:\overline{\Omega}\rightarrow \overline{\mathbb{D}}$ is a homeomorphism, there exists a constant $R>0$ such that
\[
R^{-1}< |G_{\Omega}(w,w_j)|^{1-\alpha_j}< R\;\;\;\text{for all}\;\;w\in \bigcup_i\left\{\tilde{S_i}\;|\;\tilde{S_i}\cap B(w_k,r)\neq \emptyset\right\},\;\;\text{for all}\;\;j\neq k.
\]
This implies
$|\varphi'(w)|=|A|^{-1}\prod_{j=1}^{n}|G_{\Omega}(w,w_j)|^{1-\alpha_j}\sim |G_{\Omega}(w,w_k)|^{1-\alpha_k}$ for all $w\in \bigcup_i\{\tilde{S_i}\;|\;\tilde{S_i}\cap B(w_k,r)\neq \emptyset\}.$
The estimates for $|\varphi''(w)|$ and $|\varphi'''(w)|$ in \eqref{intersection2} can be shown in the same way.

The following simple lemma is a modification of Lemma 2.3 in \cite{mannersalo}. 
\begin{lem}
	Let $\{\tilde{S}_n\}_n$ be the set of squares defined in \eqref{tildeS}. Then for all $n$
	\begin{align}
	\label{rho_1}
	&		\rho_n \sim \mathrm{dist}(w,\partial\Omega)\;\sim\; \frac{1-|\varphi(w)|^2}{|\varphi'(w)|}\;\;\;\text{for all}\;\;w\in \tilde{S}_n,\\
	\label{rho_4}
	&	\rho_n|\varphi'(w)|\leq C|1-\varphi(z)\overline{\varphi(w)}|\;\;\quad\quad\text{for all}\;\;w \in \tilde{S}_n\;\text{and all}\;\;z\in \Omega.
	\end{align}
	Moreover, if $\varphi'$ is as in \eqref{schwarz2}, then for all $n$ and all $w\in \tilde{S}_n$
	\begin{equation}
	\label{rho_5}
	\rho_n|\varphi''(w)| \leq C|\varphi'(w)|\;\;\;\;\text{and}\;\;\;\;
	\rho^2_n|\varphi'''(w)|\leq C'|\varphi'(w)|.
	\end{equation}
\end{lem}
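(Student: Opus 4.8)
The plan is to prove the three displayed estimates \eqref{rho_1}, \eqref{rho_4}, \eqref{rho_5} in order, leaning on the Whitney properties in Lemma \ref{whitney}, the Koebe-type relation \eqref{weight}, and the explicit Schwarz--Christoffel formulas \eqref{derivatives2} together with the local estimates \eqref{intersection}--\eqref{intersection2}.

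\textbf{Step 1: the comparison \eqref{rho_1}.} First I would note that by the triangle inequality and the geometry of $\tilde S_n$ (side length $\tfrac{11}{10}\rho_n$, obtained from $S_n$ by an outward dilation of relative size $\tfrac1{20}$), for every $w\in\tilde S_n$ one has $\mathrm{dist}(w,\partial\Omega)\ge \mathrm{dist}(S_n,\partial\Omega)-\mathrm{diam}\,\tilde S_n\gtrsim\rho_n$ and $\mathrm{dist}(w,\partial\Omega)\le\mathrm{dist}(S_n,\partial\Omega)+\mathrm{diam}\,\tilde S_n\lesssim\rho_n$, using $\sqrt2\rho_n\le\mathrm{dist}(S_n,\partial\Omega)\le4\sqrt2\rho_n$ from Lemma \ref{whitney}(c). (One should check that the dilation constant $\tfrac1{20}$ is small enough that $\tilde S_n$ still stays inside $\Omega$, or at least that the lower bound $\mathrm{dist}(w,\partial\Omega)\gtrsim\rho_n$ survives; this is where the precise fractions matter and is a routine but slightly delicate check.) The second comparison $\mathrm{dist}(w,\partial\Omega)\sim(1-|\varphi(w)|^2)/|\varphi'(w)|$ is exactly \eqref{weight}, applied pointwise on $\tilde S_n$.

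\textbf{Step 2: the kernel estimate \eqref{rho_4}.} Starting from \eqref{rho_1}, $\rho_n|\varphi'(w)|\sim 1-|\varphi(w)|^2\le 1-|\varphi(w)|$ for $w\in\tilde S_n$. Then I would use the elementary inequality $1-|\zeta\bar\eta|\le 2|1-\zeta\bar\eta|$ for $\zeta,\eta\in\overline{\mathbb D}$ (equivalently $|1-|\varphi(w)|^2|\le|1-\varphi(w)|(1+|\varphi(w)|)\le 2|1-\varphi(z)\overline{\varphi(w)}|\cdot$ a bounded factor — more precisely $1-|\varphi(w)|\le 1-|\varphi(z)||\varphi(w)|\le|1-\varphi(z)\overline{\varphi(w)}|$) to pass from $1-|\varphi(w)|$ to $|1-\varphi(z)\overline{\varphi(w)}|=|G_\Omega(z,w)|$, uniformly in $z\in\Omega$. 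This is the standard ``$1-|w|^2\lesssim|1-z\bar w|$'' trick and is the cleanest of the three.

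\textbf{Step 3: the derivative estimates \eqref{rho_5}.} Here I would split into two cases according to whether $\tilde S_n$ is far from all vertices or meets some disk $B(w_k,r)$, exactly as in the discussion preceding this lemma. If $\tilde S_n\subset\Omega\setminus\bigcup_k B(w_k,r)$, then by \eqref{intersection} all of $|\varphi'|,|\varphi''|,|\varphi'''|$ are bounded above and below by absolute constants on $\tilde S_n$, while $\rho_n$ is bounded (since $\Omega$ is bounded), so the inequalities are trivial after absorbing constants. If $\tilde S_n\cap B(w_k,r)\neq\emptyset$ for some (necessarily unique, by the choice $r=\min|w_j-w_k|/10$) $k$, then \eqref{intersection2} gives $|\varphi'(w)|\sim|G_\Omega(w,w_k)|^{1-\alpha_k}$, $|\varphi''(w)|\sim|G_\Omega(w,w_k)|^{1-2\alpha_k}$, $|\varphi'''(w)|\sim|G_\Omega(w,w_k)|^{1-3\alpha_k}$, so that $\rho_n|\varphi''(w)|/|\varphi'(w)|\sim\rho_n|G_\Omega(w,w_k)|^{-\alpha_k}$ and $\rho_n^2|\varphi'''(w)|/|\varphi'(w)|\sim\rho_n^2|G_\Omega(w,w_k)|^{-2\alpha_k}$. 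It then remains to bound $\rho_n|G_\Omega(w,w_k)|^{-\alpha_k}$; using \eqref{rho_1} in the form $\rho_n\sim(1-|\varphi(w)|^2)/|\varphi'(w)|\sim(1-|\varphi(w)|^2)|G_\Omega(w,w_k)|^{\alpha_k-1}$ gives $\rho_n|G_\Omega(w,w_k)|^{-\alpha_k}\sim(1-|\varphi(w)|^2)|G_\Omega(w,w_k)|^{-1}$, which is $\lesssim 1$ by the same ``$1-|\varphi(w)|^2\lesssim|1-\varphi(w)\overline{\varphi(w_k)}|$'' inequality used in Step 2 (with $z=w_k$, noting $|\varphi(w_k)|=1$). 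Squaring handles the $\varphi'''$ term.

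\textbf{Main obstacle.} The only genuinely non-routine point is the bookkeeping in Step 3: one must be sure that the constants hidden in \eqref{intersection2} and in \eqref{rho_1} are uniform over all Whitney squares meeting a fixed vertex disk, and that the reduction $\rho_n|G_\Omega(w,w_k)|^{-\alpha_k}\sim(1-|\varphi(w)|^2)|G_\Omega(w,w_k)|^{-1}$ really does collapse the vertex exponent $\alpha_k$ cleanly. Since this mirrors Lemma 2.3 of \cite{mannersalo}, the argument should go through verbatim once the Schwarz--Christoffel asymptotics \eqref{intersection2} are in place; no restriction on the angles $\alpha_k$ is needed for this lemma.
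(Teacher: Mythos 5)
Your proof is correct and follows essentially the same route as the paper: \eqref{rho_1} from Lemma \ref{whitney}(c) together with \eqref{weight}, \eqref{rho_4} via $\rho_n|\varphi'(w)|\lesssim 1-|\varphi(w)|\le|1-\varphi(z)\overline{\varphi(w)}|$, and \eqref{rho_5} by the same two-case split (vertex disks vs.\ the interior region) with the same collapse of the vertex exponent using $1-|\varphi(w)|^2\le 2|G_{\Omega}(w,w_k)|$. One small correction in your Step 1: the quantity to subtract is $\mathrm{dist}(w,S_n)\le\tfrac{\sqrt2}{20}\rho_n$ rather than $\mathrm{diam}\,\tilde S_n=\tfrac{11\sqrt2}{10}\rho_n$ (the latter exceeds the lower bound $\sqrt2\rho_n$ from Lemma \ref{whitney}(c) and would yield a negative, hence useless, estimate), after which $\mathrm{dist}(w,\partial\Omega)\ge\bigl(1-\tfrac1{20}\bigr)\sqrt2\,\rho_n$ and the rest goes through as you describe.
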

\begin{proof} The relation \eqref{rho_1} follows from \eqref{weight} and Lemma \ref{whitney} (c).
	The inequality \eqref{rho_4} holds since  $|1-\varphi(z)\overline{\varphi(w)}|\geq 1-|\varphi(w)|\geq C' \rho_n|\varphi'(w)|$ according to \eqref{rho_1}. We show that the latter inequality in \eqref{rho_5} holds, the former follows by similar arguments. Indeed, if $\tilde{S}_n\cap B(w_k,r)\neq \emptyset$ (where $r=\min_{i,j}|w_i-w_j|/10$) for some $k$, then due to \eqref{intersection2}
	\begin{eqnarray}
	\nonumber\rho_n^2|\varphi'''(w)|&\leq& C\frac{(1-|\varphi(w)|^2)^2|\varphi'''(w)|}{|\varphi'(w)|^2}\leq C'\frac{(1-|\varphi(w)|^2)^2|G_{\Omega}(w,w_k)|^{1-3\alpha_k}}{|G_{\Omega}(w,w_k)|^{2-2\alpha_k}}
	\\
	\nonumber
	&=& C'\frac{(1-|\varphi(w)|^2)^2}{|G_{\Omega}(w,w_k)|^{1+\alpha_k}}
	\leq C''|G_{\Omega}(w,w_k)|^{1-\alpha_k}\leq C'''|\varphi'(w)|,
	\end{eqnarray}
	since always $1-|\varphi(w)|^2\leq 2|1-\varphi(w)\overline{\varphi(w_k)}|=2|G_{\Omega}(w,w_k)|$.
	If $\tilde{S}_n\cap B(w_k,r)= \emptyset$ for all $k$, then the inequality $\rho_n^2|\varphi'''(w)|\leq C|\varphi'(w)|$ follows from \eqref{intersection}.
\end{proof}
We recall here some useful results regarding to the boundedness of the Bergman projections.
The maximal Bergman projection of the unit disk $P^+_{\mathbb{D}}$ is bounded on $L_{\omega}^p(\mathbb{D})$ (weight $\omega$ is positive, locally integrable) if and only if the Bergman projection $P_{\mathbb{D}}:L_{\omega}^p(\mathbb{D})\rightarrow A_{\omega}^p(\mathbb{D})$ is bounded \cite{bekolle1}. It is easy to see (by changing variables) that $P_{\Omega}$ defines a bounded projection $L^p(\Omega)\rightarrow A^p(\Omega)$ if and only if $P_{\mathbb{D}}$ defines a bounded projection $L_{\omega}^p(\mathbb{D})\rightarrow A_{\omega}^p(\mathbb{D})$, where the weight is $\omega(z)=|\psi'(z)|^{2-p}$ \cite{hedenmalm,shikhvatov}. 
These facts imply that
the boundedness of $P^+_{\Omega}:L^p(\Omega)\rightarrow L^p(\Omega)$ is equivalent to the boundedness of  $P_{\Omega}:L^p(\Omega)\rightarrow A^p(\Omega)$, see \cite[Theorem 3.1]{lanzani}.

For results concerning the boundedness of the Bergman projection on simply connected planar domains we refer to \cite{solovev, solovev2, hedenmalm, bekolle2, shikhvatov, taskinen3, lanzani}.
The next theorem gives a relation between the geometry of a polygonal domain and the boundedness of the Bergman projection. It is a partial result (restricted to polygons) of more general results of \cite{bekolle2,solovev, solovev2,taskinen3, taskinen4}. Those results were proven by showing that the boundedness of the Bergman projection is equivalent to the B\'{e}koll\'{e}-Bonami condition for the weight $|\psi|^{2-p}$.
\begin{thm}
	\label{Regulated}
	Let $\Omega \subset \mathbb{C}$ be a polygon with the maximum angle $\boldsymbol{\alpha_{m}}\pi$. The Bergman projection $P_{\Omega}$ defines a bounded projection from $L^p(\Omega)$ onto $A^p(\Omega)$ if and only if
	\begin{equation}
	\label{inward1}
	(2-p)(\boldsymbol{\alpha_m}-1)<2(p-1)\;\;\text{in the case}\;\; p\leq 2,\;\;\text{or,}\;\;(p-2)(\boldsymbol{\alpha_m}-1)<2\;\;\text{in the case}\;\;p\geq 2.
	\end{equation}
\end{thm}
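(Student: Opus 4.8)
The plan is to reduce the statement to the B\'ekoll\'e--Bonami characterization of weighted Bergman projections on the disk and then to analyse the explicit Schwarz--Christoffel weight. By the discussion preceding the theorem, $P_{\Omega}\colon L^p(\Omega)\to A^p(\Omega)$ is bounded if and only if $P_{\mathbb D}\colon L^p_\omega(\mathbb D)\to A^p_\omega(\mathbb D)$ is bounded for the weight $\omega(z)=|\psi'(z)|^{2-p}$, and by the B\'ekoll\'e--Bonami theorem (\cite{bekolle2}; see also \cite{solovev,solovev2,taskinen3,taskinen4}) this in turn holds precisely when $\omega$ satisfies the $B_p$-condition
\[
\sup_{Q}\Big(\frac{1}{|Q|}\int_Q\omega\,dA\Big)\Big(\frac{1}{|Q|}\int_Q\omega^{-1/(p-1)}\,dA\Big)^{p-1}<\infty ,
\]
the supremum being over all Carleson boxes $Q\subset\mathbb D$. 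Thus the whole proof reduces to computing this quantity for this particular $\omega$.

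First I would insert \eqref{schwarz1}: since $\psi'(z)=A\prod_{k=1}^{n}(1-\overline{z}_k z)^{\alpha_k-1}$, we have
\[
\omega(z)=|A|^{2-p}\prod_{k=1}^{n}|1-\overline{z}_k z|^{\beta_k},\qquad \beta_k:=(\alpha_k-1)(2-p),
\]
and $\omega$ is bounded above and below by positive constants on each compact subset of $\overline{\mathbb D}$ that avoids the finitely many points $z_1,\dots,z_n$. From this I would deduce that the $B_p$-quantity of any Carleson box staying away from all the $z_k$, and of any box whose side length is bounded below (using also that $\omega$ and $\omega^{-1/(p-1)}$ lie in $L^1(\mathbb D)$, which is itself part of what has to be checked), is $\lesssim 1$ automatically. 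Since there are only finitely many $z_k$, a box meeting two of them has side length bounded below; hence the only remaining case is a Carleson box $Q$ of small side length $h$ anchored at a single $z_k$, where $\omega(z)\sim|1-\overline{z}_k z|^{\beta_k}$.

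For such a box I would use the elementary estimates $\int_Q|1-\overline{z}_k z|^{\gamma}\,dA\sim h^{2+\gamma}$ when $\gamma>-2$, the integral diverging when $\gamma\le-2$, applied with $\gamma=\beta_k$ and with $\gamma=-\beta_k/(p-1)$; multiplying, the box contributes $\sim h^{-2}h^{2+\beta_k}\big(h^{-2}h^{2-\beta_k/(p-1)}\big)^{p-1}\sim 1$, uniformly in $h$. Hence $\omega\in B_p$ if and only if $-2<\beta_k<2(p-1)$ for every $k$. It then remains to simplify this finite system. When $p\le2$, $\beta_k$ is nondecreasing in $\alpha_k$: the lower bounds are automatic ($\beta_k\ge-(2-p)>-2$ for outward corners, $\beta_k\ge0$ for inward ones), and, the upper bounds being tightest at $\alpha_k=\boldsymbol{\alpha_m}$, the system collapses to $(2-p)(\boldsymbol{\alpha_m}-1)<2(p-1)$. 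When $p\ge2$, $\beta_k$ is nonincreasing in $\alpha_k$ and bounded above by $p-2<2(p-1)$, so the upper bounds are automatic, and, the lower bounds being tightest at $\alpha_k=\boldsymbol{\alpha_m}$, the system collapses to $(p-2)(\boldsymbol{\alpha_m}-1)<2$. In both cases this is exactly \eqref{inward1}.

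The step I expect to be the main obstacle is verifying the $B_p$-condition for the \emph{product} weight uniformly over \emph{all} Carleson boxes rather than for a single power weight: one has to be sure that boxes of intermediate size, or boxes enclosing several vertices, contribute nothing pathological. The scheme above isolates the one genuinely delicate regime (small boxes anchored at a single $z_k$) and disposes of everything else by compactness together with the finiteness of the vertex set, but the careful bookkeeping of the radial/angular integral estimates behind $\int_Q|1-\overline{z}_k z|^{\gamma}\,dA\sim h^{2+\gamma}$ is where the real work lies.
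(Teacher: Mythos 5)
Your argument is correct and follows exactly the route the paper itself indicates: Theorem \ref{Regulated} is not proved in the paper but delegated to \cite{bekolle2,taskinen3,taskinen4}, with the explicit remark that those proofs reduce boundedness of $P_{\Omega}$ to the B\'ekoll\'e--Bonami condition for the weight $|\psi'|^{2-p}$ on $\mathbb{D}$, which is precisely your reduction. Your verification of the $B_p$ condition for the Schwarz--Christoffel product weight (using $|1-\overline{z}_k z|=|z-z_k|$ to get $\int_Q|1-\overline{z}_k z|^{\gamma}\,dA\sim h^{2+\gamma}$ for $\gamma>-2$, and collapsing the finite system $-2<(\alpha_k-1)(2-p)<2(p-1)$ to the single inequality at $\boldsymbol{\alpha_m}$) is sound and matches \eqref{inward1} in both cases.
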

\noindent For the proof, see \cite[Th\'{e}or\`{e}me 2.1]{bekolle2}  or \cite[Theorem 4.1]{taskinen3} and Remarks in \cite[p.99]{taskinen4}. Th\'{e}or\`{e}me 2.1 in \cite{bekolle2} deals with Lavrentiev domains (for the definition, see \cite[p.163--164]{pommerenke}).
The papers \cite{taskinen3} and  \cite{taskinen4} deal with regulated domains (for the definition, see \cite[p.59--60]{pommerenke}).
\begin{rem}
	Notice that there is no restriction for outward pointing corners ($0<\alpha<1$), i.e., if $\Omega$ is a polygon with only outward corners, then $P_{\Omega}$ is bounded on $L^p(\Omega)$ for all $1<p<\infty$. We also point out that if $\frac{4}{3}\leq p\leq 4,$ then $P_{\Omega}$ is bounded on $L^p(\Omega)$ for every polygon $\Omega.$
\end{rem}
\noindent The next corollary is an obvious consequence of Theorem \ref{Regulated}.
\begin{cor}
	\label{unit} Let $\Omega$ be a polygon with the maximum angle $\boldsymbol{\alpha_{m}}\pi$.
Let $\psi':\mathbb{D}\rightarrow \Omega$ be as \eqref{schwarz1} and $\omega(z)=|\psi'(z)|^{2-p}$ a weight on space $L^p_{\omega}(\mathbb{D})$. If $0<\boldsymbol{\alpha_{m}}<1$, then $P_{\mathbb{D}}$ defines a bounded projection $L^p_{\omega}(\mathbb{D})\rightarrow A^p_{\omega}(\mathbb{D})$ for all $1<p<\infty$. If $1<\boldsymbol{\alpha_{m}}<2$, then $P_{\mathbb{D}}$ defines a bounded projection $L^p_{\omega}(\mathbb{D})\rightarrow A^p_{\omega}(\mathbb{D})$ if and only if
	\begin{equation}
	\label{unit1}
	(2-p)(\boldsymbol{\alpha_m}-1)<2(p-1)\;\;\text{in the case}\;\; p\leq 2,\;\;\text{or,}\;\;(p-2)(\boldsymbol{\alpha_m}-1)<2\;\;\text{in the case}\;\;p\geq 2.
	\end{equation}
\end{cor}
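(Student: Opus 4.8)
The plan is to deduce the statement directly from Theorem \ref{Regulated} together with the change-of-variables equivalence recalled just above it. Recall from \cite{hedenmalm, shikhvatov} that $P_{\Omega}$ is bounded as a map $L^p(\Omega)\to A^p(\Omega)$ if and only if $P_{\mathbb{D}}$ is bounded as a map $L^p_{\omega}(\mathbb{D})\to A^p_{\omega}(\mathbb{D})$ with exactly the weight $\omega(z)=|\psi'(z)|^{2-p}$ appearing in the statement of the corollary. So the whole problem reduces to deciding, for the given polygon $\Omega$, when $P_{\Omega}$ is bounded on $L^p(\Omega)$, and that is precisely the content of Theorem \ref{Regulated}: boundedness holds if and only if $(2-p)(\boldsymbol{\alpha_m}-1)<2(p-1)$ when $p\le 2$, or $(p-2)(\boldsymbol{\alpha_m}-1)<2$ when $p\ge 2$.

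Next I would observe that in the case $0<\boldsymbol{\alpha_m}<1$ both of these inequalities hold automatically. Indeed, if $p\le 2$ then $2-p\ge 0$ and $\boldsymbol{\alpha_m}-1<0$, so the left-hand side $(2-p)(\boldsymbol{\alpha_m}-1)$ is $\le 0$, while the right-hand side $2(p-1)$ is strictly positive since $p>1$; likewise, if $p\ge 2$ then $(p-2)(\boldsymbol{\alpha_m}-1)\le 0<2$. Hence the condition \eqref{inward1} is vacuous, and $P_{\mathbb{D}}$ is bounded on $L^p_{\omega}(\mathbb{D})$ for every $1<p<\infty$. When $1<\boldsymbol{\alpha_m}<2$, the displayed condition \eqref{unit1} is literally the same pair of inequalities as \eqref{inward1}, so combining Theorem \ref{Regulated} with the equivalence above yields the asserted ``if and only if'' with no further work.

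Since the argument is just a combination of two already-established facts, there is essentially no obstacle. The only point requiring (trivial) care is the sign computation showing that the B\'ekoll\'e--Bonami-type inequality is automatically satisfied when all corners are outward, and the bookkeeping that $\boldsymbol{\alpha_m}$, the maximum of the $\alpha_k$, is indeed the only geometric quantity entering the criterion — but that is already built into the formulation of Theorem \ref{Regulated}.
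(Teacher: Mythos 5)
Your proposal is correct and follows exactly the route the paper intends: the corollary is stated there as an ``obvious consequence'' of Theorem \ref{Regulated} via the change-of-variables equivalence between the boundedness of $P_{\Omega}$ on $L^p(\Omega)$ and of $P_{\mathbb{D}}$ on $L^p_{\omega}(\mathbb{D})$ with $\omega=|\psi'|^{2-p}$, and your sign check showing that \eqref{inward1} is automatic when $0<\boldsymbol{\alpha_m}<1$ is the only detail that needs to be supplied.
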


Let $f\in A^p(\Omega)$. Let $S_n$ be a Whitney square and $\tilde{S}_n\supset S_n$, see \eqref{Sn} and \eqref{tildeS}. Due to the mean value property for analytic functions, we have
\begin{equation}
\label{meanvalue}
|f(z)|\leq \frac{C}{|S_n|}\int_{\tilde{S}_n}|f(w)|dA(w)
\end{equation}
for all $z\in S_n$ (for details, see \cite{mannersalo}).

The following norm estimate follows from the proof of Theorem 4.28 in \cite{zhu}.
\begin{lem}\emph{\cite{zhu}}
	\label{norms}
	Let $n\in \mathbb{Z_+}$. There exists positive constant $C_n$ such that 
	\begin{equation}
	\label{norms1}	
	 \left\Vert(1-|z|^2)^nf^{(n)}(z)\right\Vert_{p,\mathbb{D}}\leq C_n\left\Vert f \right\Vert_{p,\mathbb{D}}\;\;\;\text{for all}\;\;\;f\in A^p(\mathbb{D}).
	\end{equation}
\end{lem}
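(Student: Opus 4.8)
The plan is to obtain the inequality from the Bergman reproducing formula on $\mathbb{D}$ together with Schur's test applied to an explicit positive integral operator. First, since $\mathbb{D}$ has finite area and $p>1$, Hölder's inequality gives $A^p(\mathbb{D})\subset A^1(\mathbb{D})$, and every $f\in A^1(\mathbb{D})$ satisfies the reproducing identity \eqref{P_unitdisk}; this can be seen by applying the reproducing property of $A^2(\mathbb{D})$ to the dilations $f_r(w):=f(rw)$, which are holomorphic past $\overline{\mathbb{D}}$, and letting $r\to1^-$ (for fixed $z$ the kernel $w\mapsto(1-z\overline{w})^{-2}$ is bounded on $\mathbb{D}$, while $f_r\to f$ in $L^1(\mathbb{D})$). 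For $z$ in a fixed compact subset of $\mathbb{D}$ the kernel $w\mapsto\overline{w}^{\,n}(1-z\overline{w})^{-2-n}$ is uniformly bounded and $f\in L^1(\mathbb{D})$, so I may differentiate under the integral sign $n$ times, obtaining
\[
f^{(n)}(z)=(n+1)!\int_{\mathbb{D}}\frac{\overline{w}^{\,n}f(w)}{(1-z\overline{w})^{2+n}}\,dA(w),\qquad z\in\mathbb{D}.
\]

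Taking absolute values and multiplying by $(1-|z|^2)^n$ gives the pointwise majorization
\[
(1-|z|^2)^n\,|f^{(n)}(z)|\;\le\;(n+1)!\,(T|f|)(z),\qquad (Tg)(z):=\int_{\mathbb{D}}\frac{(1-|z|^2)^n}{|1-z\overline{w}|^{2+n}}\,g(w)\,dA(w),
\]
so it suffices to show that $T$ is bounded on $L^p(\mathbb{D})$ for $1<p<\infty$ with norm depending only on $n$ (and $p$). For this I would use Schur's test with the weight $h(w)=(1-|w|^2)^{-s}$ and the Forelli--Rudin estimate $\int_{\mathbb{D}}(1-|w|^2)^c|1-z\overline{w}|^{-(2+c+t)}\,dA(w)\sim(1-|z|^2)^{-t}$, valid for $c>-1$, $t>0$. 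Writing $q$ for the conjugate exponent of $p$, the two Schur integrals reduce, via this estimate, to
\[
\int_{\mathbb{D}}\frac{(1-|z|^2)^n(1-|w|^2)^{-sq}}{|1-z\overline{w}|^{2+n}}\,dA(w)\sim(1-|z|^2)^{-sq}=h(z)^q\qquad(0<sq<1),
\]
\[
\int_{\mathbb{D}}\frac{(1-|z|^2)^{n-sp}}{|1-z\overline{w}|^{2+n}}\,dA(z)\sim(1-|w|^2)^{-sp}=h(w)^p\qquad(0<sp<n+1),
\]
where $n\ge1$ ensures the relevant parameter $t$ is strictly positive in both lines. Both constraints are met by any $s$ with $0<s<\min\{1/q,(n+1)/p\}$, a nonempty interval, so Schur's test yields $\|T\|_{L^p(\mathbb{D})\to L^p(\mathbb{D})}\le C_n$; combined with the pointwise bound this gives $\|(1-|z|^2)^nf^{(n)}\|_{p,\mathbb{D}}\le(n+1)!\,C_n\|f\|_{p,\mathbb{D}}$.

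The only steps needing genuine care are the bookkeeping in Schur's test — choosing $s$ in the admissible window and checking that each Forelli--Rudin estimate is applied within its hypotheses — since the reproducing formula and the differentiation under the integral are routine. As an alternative to the latter two, one can instead prove the inequality first for polynomials (dense in $A^p(\mathbb{D})$) and pass to the limit using locally uniform convergence of derivatives and Fatou's lemma.
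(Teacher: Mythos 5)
Your proof is correct: the representation $f^{(n)}(z)=(n+1)!\int_{\mathbb{D}}\overline{w}^{\,n}f(w)(1-z\overline{w})^{-2-n}\,dA(w)$ (justified for $f\in A^p(\mathbb{D})\subset A^1(\mathbb{D})$ by dilation and differentiation under the integral), the pointwise domination by the positive operator $T$, and the Schur test with weight $(1-|w|^2)^{-s}$ for $0<s<\min\{1/q,(n+1)/p\}$ all check out against the Forelli--Rudin estimates, with the parameter $t$ strictly positive in both Schur integrals. The paper gives no proof of this lemma, merely citing the proof of Theorem 4.28 in Zhu, which runs by essentially this same reproducing-formula-plus-Schur-test argument, so your route coincides with the intended one.
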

\noindent 
We need a generalization of Lemma \ref{norms} (when $n=1$ or $n=2$) to the case of $A^p(\Omega),$ where $\Omega$ is a polygon (compare this to \cite[Corollary 2.5]{mannersalo}):
\begin{lem}
	\label{corderivative}
	Let $\Omega\subset \mathbb{C}$ be a polygon. There exist $C_p>0$ and $C'_p>0$ such that for all $f \in A^p(\Omega)$ \begin{equation}
	\label{cor1}
	\left\Vert\mathrm{dist}(w,\partial\Omega)f'(w)\right\Vert_p	\leq C_p\left\Vert f\right\Vert_p
	\;\;\;\;\text{and}\;\;\;\;
	\left\Vert (\mathrm{dist}(w,\partial\Omega))^2f''(w)\right\Vert_p\leq C_p' \left\Vert f\right\Vert_p. \end{equation}
\end{lem}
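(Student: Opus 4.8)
The plan is to transfer the statement from $\mathbb{D}$ to $\Omega$ by composing with the conformal map $\varphi$, using Lemma \ref{norms} together with the distortion estimates already collected in this section. First I would set $g:=f\circ\psi\in A^p_\omega(\mathbb{D})$ with the weight $\omega(z)=|\psi'(z)|^{2-p}$; the change of variables $w=\psi(z)$, $dA(w)=|\psi'(z)|^2\,dA(z)$ shows $\|f\|_{p,\Omega}=\|g\|_{p,\omega,\mathbb{D}}$, so the target inequalities become weighted estimates for $g$ on the disk. The first derivative relation is $f'(w)=g'(\varphi(w))\varphi'(w)$, and by \eqref{weight} we have $\mathrm{dist}(w,\partial\Omega)\sim (1-|\varphi(w)|^2)/|\varphi'(w)|$; hence $\mathrm{dist}(w,\partial\Omega)|f'(w)|\sim (1-|\varphi(w)|^2)|g'(\varphi(w))|$. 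Changing variables back, $\|\mathrm{dist}(w,\partial\Omega)f'(w)\|_{p,\Omega}\sim \|(1-|z|^2)g'(z)|\psi'(z)|\|_{p,\mathbb{D}}=\|(1-|z|^2)g'(z)\|_{p,\omega,\mathbb{D}}$, which at first glance looks like Lemma \ref{norms} but with a weight. So the genuine content is a \emph{weighted} version of Lemma \ref{norms}.

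The cleanest route around the weight is to avoid it entirely at the level of $g$ and instead differentiate the Bergman reproducing formula on $\Omega$ directly. Concretely, for $f\in A^p(\Omega)$ and $z\in\Omega$ write $f(z)=\int_\Omega K_\Omega(z,w)f(w)\,dA(w)$ with $K_\Omega(z,w)=\varphi'(z)\overline{\varphi'(w)}G_\Omega(z,w)^{-2}$; differentiating under the integral in $z$ gives $f'(z)$ as an integral against a kernel that is $\partial_z\big(\varphi'(z)G_\Omega(z,w)^{-2}\big)\overline{\varphi'(w)}$, i.e. a combination of $\varphi''(z)G_\Omega(z,w)^{-2}$ and $\varphi'(z)^2\overline{\varphi(w)}G_\Omega(z,w)^{-3}$, each times $\overline{\varphi'(w)}$. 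Multiplying by $\mathrm{dist}(z,\partial\Omega)\sim\rho_n$ for $z\in S_n$ and invoking \eqref{rho_5} to absorb $\rho_n|\varphi''(z)|\leq C|\varphi'(z)|$, together with \eqref{rho_1}, \eqref{rho_4} to bound $\rho_n|\varphi'(z)|$ against $|G_\Omega(z,w)|$ and $1-|\varphi(z)|^2$, one reduces the kernel of $\mathrm{dist}(z,\partial\Omega)f'(z)$ pointwise to something dominated by $C|K_\Omega(z,w)|$ plus terms of the same type. Then $\|\mathrm{dist}(z,\partial\Omega)f'(z)\|_p\leq C\,\|P_\Omega^+|f|\|_p\leq C\|f\|_p$ provided $P_\Omega^+$ is bounded on $L^p(\Omega)$ — and for $4/3\le p\le 4$ it always is, while in the remaining range one uses the hypothesis on $\boldsymbol{\alpha_m}$ exactly as in Theorem \ref{Regulated}. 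The second-derivative estimate is identical in spirit: differentiate twice, producing kernels with $\varphi'''$, $\varphi''\varphi'$, $\varphi'^3$ against powers $G_\Omega^{-2},G_\Omega^{-3},G_\Omega^{-4}$, multiply by $\mathrm{dist}(z,\partial\Omega)^2\sim\rho_n^2$, and absorb using \emph{both} inequalities in \eqref{rho_5} and \eqref{rho_4}.

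I expect the main obstacle to be bookkeeping rather than a conceptual difficulty: one must check that every term arising from differentiating $\varphi'(z)G_\Omega(z,w)^{-2}$ (and its second derivative) is, after multiplication by the appropriate power of $\rho_n$, pointwise bounded by a constant times $|K_\Omega(z,w)|=|\varphi'(z)||\varphi'(w)||G_\Omega(z,w)|^{-2}$; the potentially dangerous terms are those with the highest power of $G_\Omega^{-1}$ in the denominator (namely $G_\Omega^{-3}$ in the first derivative and $G_\Omega^{-4}$ in the second), but these come with a compensating factor of $\varphi'(z)$ or $\varphi'(z)^2$ that \eqref{rho_4} converts into $|G_\Omega(z,w)|$ or $|G_\Omega(z,w)|^2$, and any surviving factor of $1-|\varphi(z)|^2$ is harmless since it is $\le 2|G_\Omega(z,w)|$ and $\le 2$. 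A secondary point is the legitimacy of differentiating under the integral sign, which follows by the usual local uniform convergence argument on compact subsets of $\Omega$, exactly as in \cite{mannersalo}. Once the pointwise kernel domination is in place, the result is immediate from the boundedness of $P_\Omega^+$ on $L^p(\Omega)$.
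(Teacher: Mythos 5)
Your first paragraph correctly identifies the real issue: a direct change of variables turns the claim into a \emph{weighted} version of Lemma \ref{norms}, which is not what that lemma gives. But the workaround you then choose --- differentiating the reproducing formula and dominating everything by $|K_\Omega(z,w)|$ so as to invoke the boundedness of $P_\Omega^+$ on $L^p(\Omega)$ --- has a genuine gap: Lemma \ref{corderivative} is stated for \emph{every} polygon and \emph{every} $1<p<\infty$, with no hypothesis on $\boldsymbol{\alpha_m}$, whereas $P_\Omega^+$ is bounded on $L^p(\Omega)$ only under the angle condition of Theorem \ref{Regulated}. Your phrase ``in the remaining range one uses the hypothesis on $\boldsymbol{\alpha_m}$'' imports a hypothesis the lemma does not have. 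This is not a cosmetic loss of generality: the lemma is invoked in the proofs of Propositions \ref{main2} and \ref{main3} (see the chain of inequalities ending at \eqref{vert}), which live precisely in the regime $p>4$ or $p<4/3$ with $\boldsymbol{\alpha_m}$ large, i.e.\ exactly where $P_\Omega^+$ fails to be bounded. So your argument proves a strictly weaker statement than is needed. (There is also a secondary point: for $p<2$ the pointwise reproducing formula $f=\int_\Omega K_\Omega(\cdot,w)f(w)\,dA(w)$ for all $f\in A^p(\Omega)$ itself requires the boundedness of the projection plus a density argument, so it cannot serve as the starting point in the general case.)

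The fix is the conjugation trick the paper uses: instead of $g=f\circ\psi$ in the weighted space, work with $h:=(f\circ\psi)(\psi')^{2/p}$, which lies in the \emph{unweighted} $A^p(\mathbb{D})$ with $\Vert h\Vert_{p,\mathbb{D}}=\Vert f\Vert_{p,\Omega}$. Then
$(1-|z|^2)(\psi')^{2/p+1}f'(\psi(z))=(1-|z|^2)\bigl(D_z h-\tfrac{2}{p}(f\circ\psi)(\psi')^{2/p-1}\psi''\bigr)$,
the first term is handled by the unweighted Lemma \ref{norms} applied to $h$, and the error term is controlled because $|(1-|z|^2)(\psi')^{-1}\psi''|$ is uniformly bounded by the explicit Schwarz--Christoffel form of $\psi'$ (via \eqref{intersection} and \eqref{intersection2}); the second-order estimate is the same with the additional bounds on $(1-|z|^2)^2(\psi')^{-2}(\psi'')^2$ and $(1-|z|^2)^2(\psi')^{-1}\psi'''$. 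This route needs no Bergman projection at all, which is exactly why the lemma holds unconditionally.
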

\begin{proof}
Since $\text{dist}(w,\partial \Omega)\sim (1-|\varphi(w)|^2)|\varphi'(w)|^{-1}$ according to \eqref{weight}, we obtain by changing variables $z=\varphi(w)$ ($w=\psi(z)$, Jacobian $|\psi'|^2$) and using the triangle inequality
\begin{align}
\nonumber\left\Vert\mathrm{dist}(w,\partial\Omega)f'(w)\right\Vert_p&\leq C\left\Vert  (1-|\varphi(w)|^2)|\varphi'(w)|^{-1}f'(w)\right\Vert_p= C\left\Vert  (1-|z|^2)(\psi'(z))^{2/p+1}f'(\psi(z))\right\Vert_{p,\mathbb{D}}\\
&\nonumber=C\left\Vert  (1-|z|^2)\Big(D_z\big((f\circ \psi)(\psi')^{2/p}\big)-\frac{2}{p}(f\circ \psi)(\psi')^{2/p-1}\psi''\Big)\right\Vert_{p,\mathbb{D}}\\
&\nonumber\leq C_p\left\Vert  (1-|z|^2)D_z\big((f\circ \psi)(\psi')^{2/p}\big)\right\Vert_{p,\mathbb{D}}+C_p\left\Vert(1-|z|^2)(f\circ \psi)(\psi')^{2/p-1}\psi''\right\Vert_{p,\mathbb{D}}\\
&\label{third}\leq C_p'\left\Vert (f\circ \psi)(\psi')^{2/p}\right\Vert_{p,\mathbb{D}}=C_p'||f||_{p},
\end{align}
where $D_z$ is the derivative operator. We have applied \eqref{norms1} with $n=1$ to the first term on the second to last line. The second term is also bounded above by $\left\Vert (f\circ \psi)(\psi')^{2/p}\right\Vert_{p,\mathbb{D}}$ since for all $z\in \mathbb{D}$ ($z=\varphi(w)$) \begin{equation}
\label{fourth}
\left|(1-|z|^2)(\psi'(z))^{-1}\psi''(z)\right|=\left|(1-|\varphi(w)|^2)(\varphi'(w))^{-2}\varphi''(w)\right|\leq C'\end{equation} 
because of \eqref{intersection} and \eqref{intersection2}. The second norm estimate in \eqref{cor1} follows by similar arguments:
\begin{align*}
\left\Vert(\mathrm{dist}(w,\partial\Omega))^2f''(w)\right\Vert_p&\leq C\left\Vert  (1-|\varphi(w)|^2)^2|\varphi'(w)|^{-2}f''(w)\right\Vert_p\\&= C\left\Vert  (1-|z|^2)^2(\psi'(z))^{2/p+2}f''(\psi(z))\right\Vert_{p,\mathbb{D}}\\
&\leq C_p'\left\Vert (f\circ \psi)(\psi')^{2/p}  \right\Vert_{p,\mathbb{D}}=C_p'||f||_{p},
\end{align*}
where we have taken into account that
\begin{align}
\nonumber
(1-|z|^2)^2(\psi')^{2/p+2}f''(\psi(z))&=(1-|z|^2)^2\Big(D^2_z\big((f\circ\psi)(\psi')^{2/p}\big)-(4/p+1)(\psi')^{2/p}\psi''f'(\psi(z))
\\&-2/p(2/p-1)(f\circ\psi)(\psi')^{2/p-2}(\psi'')^2-2/p(f\circ\psi)(\psi')^{2/p-1}\psi'''\Big).\label{second}
\end{align}
The norm of each term of \eqref{second} is bounded above by $C_p''\left\Vert  (f\circ \psi)(\psi')^{2/p} \right\Vert_{p,\mathbb{D}}$: the first term due to \eqref{norms1} with $n=2$, the second term because of \eqref{third} and \eqref{fourth}, the third term since $|(1-|z|^2)^2(\psi'(z))^{-2}(\psi''(z))^2|\leq C''$ for all $z$ and the last term since $|(1-|z|^2)^2(\psi'(z))^{-1}\psi'''(z)|\leq C'''$ (these conclusions can be made by considering directly the formula of $\psi'$, see \eqref{schwarz1}, and its derivatives $\psi'',\psi'''$ or by using $\varphi',\varphi'',\varphi'''$, see \eqref{derivatives2}, \eqref{intersection} and \eqref{intersection2}).
\end{proof}
\noindent
The next lemma relates the Taylor coefficients of an $A^p(\mathbb{D})$-function with its norm \cite[p.83--84]{duren}.
\begin{lem}\emph{\cite{duren}}
	\label{taylor}
	Let $2<p<\infty$ and $f\in A^p(\mathbb{D})$. Assume that $\sum_{n=0}^{\infty}a_nz^n$ is the Taylor series of $f.$ Then there exists $C_p>0$ such that
	\begin{equation}
	\label{taylorcof}
	C^{-1}_p\sum_{n=0}^{\infty}(n+1)^{-1}|a_n|^p\leq \left\Vert f\right\Vert^p_p\leq C_p\sum_{n=0}^{\infty}(n+1)^{p-3}|a_n|^p.
	\end{equation}
	Moreover, the exponents of $n+1$ are the best possible.
\end{lem}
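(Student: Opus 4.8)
The plan is to pass from the area integral to circular integral means and to compare the $L^p$ mean with the $L^2$ mean, which is the only one computable directly from the coefficients. Writing $M_q(r,f)^q=\int_0^{2\pi}|f(re^{i\theta})|^q\,\frac{d\theta}{2\pi}$ and integrating in polar coordinates gives $\|f\|_p^p=2\int_0^1 M_p(r,f)^p\,r\,dr$, while Parseval yields the exact identity $M_2(r,f)^2=\sum_{n=0}^\infty|a_n|^2 r^{2n}$. Since $p>2$, Jensen's inequality on the probability space $\frac{d\theta}{2\pi}$ gives $M_2(r,f)\le M_p(r,f)$. The left inequality of \eqref{taylorcof} will fall out of this comparison directly, whereas the right inequality has no such identity behind it and is the real obstacle.

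For the lower estimate I would argue elementarily. From $M_2\le M_p$ we get $M_p(r,f)^p\ge\big(\sum_n|a_n|^2 r^{2n}\big)^{p/2}$, and since $p/2\ge1$ the map $t\mapsto t^{p/2}$ is superadditive on $[0,\infty)$, so that $\big(\sum_n|a_n|^2 r^{2n}\big)^{p/2}\ge\sum_n|a_n|^p r^{np}$. Integrating against $2r\,dr$ and using $2\int_0^1 r^{np+1}\,dr=\frac{2}{np+2}\sim(n+1)^{-1}$ gives, by monotone convergence, $\|f\|_p^p\ge\sum_n\frac{2}{np+2}|a_n|^p\ge C_p^{-1}\sum_n(n+1)^{-1}|a_n|^p$, which is the left half.

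For the upper estimate the missing tool is a reverse comparison of $M_p$ with the coefficients, and this is exactly the content of the Hardy--Littlewood theorem on power series: for $2\le p<\infty$ and $h=\sum_n c_n z^n\in H^p(\mathbb{D})$ one has $\|h\|_{H^p}^p\le C_p\sum_n(n+1)^{p-2}|c_n|^p$ (this is the deep direction, proved through Paley's theorem or a Littlewood--Paley decomposition into dyadic frequency blocks, and I would simply invoke it). Applying it to the dilate $h=f_r$, whose Taylor coefficients are $a_n r^n$, gives $M_p(r,f)^p\le C_p\sum_n(n+1)^{p-2}|a_n|^p r^{np}$; integrating against $2r\,dr$ and using $\frac{2}{np+2}\sim(n+1)^{-1}$ once more yields $\|f\|_p^p\le C_p\sum_n(n+1)^{p-2}(n+1)^{-1}|a_n|^p=C_p\sum_n(n+1)^{p-3}|a_n|^p$, the right half. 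I note that via the duality $(A^p(\mathbb{D}))^*\cong A^{p'}(\mathbb{D})$ and the pairing $\langle f,g\rangle=\sum_n a_n\overline{b_n}/(n+1)$ coming from $\int_{\mathbb{D}}z^n\overline{z^m}\,dA=\delta_{nm}/(n+1)$, this upper bound for $p>2$ is equivalent to the corresponding lower bound for the conjugate exponent $p'<2$; the two are genuinely dual and neither is elementary, so duality does not circumvent the Hardy--Littlewood input.

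Finally I would check that the exponents cannot be improved. The monomials $f=z^N$ give $\|z^N\|_p^p=\frac{2}{Np+2}\sim(N+1)^{-1}$, so on the left no exponent exceeding $-1$ is admissible. For the right-hand exponent I would take the Dirichlet-kernel partial sums $f_N(z)=\sum_{n=0}^N z^n=\frac{1-z^{N+1}}{1-z}$: a direct estimate of $\int_{\mathbb{D}}|1-z^{N+1}|^p|1-z|^{-p}\,dA$, splitting the disk at $|1-z|\sim1/N$, gives $\|f_N\|_p^p\sim N^{p-2}$ for $p>2$, while $\sum_{n=0}^N(n+1)^s\sim N^{s+1}$, so the right inequality forces $s\ge p-3$. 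The main difficulty is thus concentrated in the upper bound, both in its reliance on the Hardy--Littlewood theorem (the only non-elementary ingredient) and in producing the extremal family $f_N$ that pins down the sharp exponent $p-3$.
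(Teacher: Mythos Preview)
The paper does not prove this lemma at all: it is stated with the attribution \emph{\cite{duren}} and the preceding text refers the reader to \cite[p.~83--84]{duren} for the proof, so there is no ``paper's own proof'' to compare your proposal against. Your argument is correct and is essentially the standard one found in Duren: the lower bound via $M_2\le M_p$ and superadditivity of $t\mapsto t^{p/2}$, the upper bound by applying the Hardy--Littlewood inequality $\|h\|_{H^p}^p\le C_p\sum_n(n+1)^{p-2}|c_n|^p$ (for $p\ge2$) to dilates and integrating in $r$, and the sharpness via monomials and Dirichlet-type partial sums.
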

In the last section we need the following mapping properties of the Bergman projection $P_{\mathbb{D}}$
(for details and proofs, see \cite[p.~46--47]{duren}).
\begin{lem}\emph{\cite{duren}}
	\label{bergman_prop}
	Let $P_{\mathbb{D}}$ be the Bergman projection of the unit disk, and let $m$ and $n$ be nonnegative integers. Then
		\begin{align}
		\label{mapa}
		& P_{\mathbb{D}}((1-|z|^2)z^m) = \frac{z^m}{m+2},\\
		\label{mapb}
		& P_{\mathbb{D}}(z^m\overline{z}^n) = \left\{\begin{array}{ll}
		\frac{m-n+1}{m+1}z^{m-n}, &\;\;\;\text{for all}\;\; m\geq n \\
		0, &\;\;\;\text{for all}\;\; m< n,
		\end{array} \right.\\
		\label{mapc}
		&P_{\mathbb{D}}((1-|z|^2)z^{m}\overline{z}^n) =  \left\{\begin{array}{ll}
		\frac{m-n+1}{(m+1)(m+2)}z^{m-n}, &\;\;\text{for all}\;\; m\geq n \\
		0, &\;\;\text{for all}\;\; m< n.
		\end{array} \right.
		\end{align}
\end{lem}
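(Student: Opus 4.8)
The plan is to work directly from the integral representation \eqref{P_unitdisk} of $P_{\mathbb{D}}$ and reduce all three identities to two elementary radial moments. First I would expand the Bergman kernel of the disk as a power series: for a fixed $z\in\mathbb{D}$ and every $w\in\mathbb{D}$ one has $|z\overline{w}|\le|z|<1$, so the identity
\[
\frac{1}{(1-z\overline{w})^2}=\sum_{k=0}^{\infty}(k+1)\,z^k\overline{w}^k
\]
converges uniformly in $w$ over all of $\mathbb{D}$. This uniform convergence is the one point that genuinely needs to be recorded, since it is what licenses interchanging the summation with the integral in \eqref{P_unitdisk}; everything that follows is bookkeeping.

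Next I would substitute a monomial-type symbol and integrate term by term. Writing $w=re^{i\theta}$ and using $dA(w)=\pi^{-1}r\,dr\,d\theta$, the angular integral $\int_0^{2\pi}e^{i(m-n-k)\theta}\,d\theta$ vanishes unless $k=m-n$, so at most a single term of the series survives, and its survival forces $m\ge n$ because $k\ge 0$; this already produces the dichotomy appearing in \eqref{mapb} and \eqref{mapc}. The surviving coefficient is then controlled by the two radial moments
\[
\int_{\mathbb{D}}|w|^{2m}\,dA(w)=\frac{1}{m+1},\qquad \int_{\mathbb{D}}(1-|w|^2)|w|^{2m}\,dA(w)=\frac{1}{(m+1)(m+2)},
\]
both immediate from $\int_0^1 r^{2m+1}\,dr$ and $\int_0^1(1-r^2)r^{2m+1}\,dr$ after the factor $\pi^{-1}\cdot 2\pi$ cancels.

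Combining these, for $f(w)=w^m\overline{w}^n$ the single $k=m-n$ term contributes $(m-n+1)\cdot\frac{1}{m+1}\,z^{m-n}$, which is exactly \eqref{mapb}; inserting instead the weight $(1-|w|^2)$ merely replaces the first moment by the second and yields $(m-n+1)\cdot\frac{1}{(m+1)(m+2)}\,z^{m-n}$, which is \eqref{mapc}. Finally \eqref{mapa} is just the case $n=0$ of \eqref{mapc}, where the factor $m+1$ in the numerator cancels against the denominator to leave $z^m/(m+2)$. Since the symbols here are polynomials in $w$ and $\overline{w}$ times a bounded weight, there is no analytic difficulty to speak of; the only care required is the justification of the term-by-term integration via the uniform bound $|z\overline{w}|\le|z|<1$ and the correct identification of the unique index $k=m-n$ that survives the angular integration.
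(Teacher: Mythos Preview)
Your computation is correct: the kernel expansion, the orthogonality of the angular integrals, and the two radial moments are all stated and used accurately, and they yield \eqref{mapa}--\eqref{mapc} exactly as you claim.

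As for comparison, the paper does not actually prove this lemma; it simply cites \cite{duren} and adds the one-line remark that \eqref{mapc} follows from \eqref{mapa} and \eqref{mapb} (in fact from \eqref{mapb} alone, via $(1-|w|^2)w^m\overline{w}^n=w^m\overline{w}^n-w^{m+1}\overline{w}^{n+1}$). You instead compute \eqref{mapc} directly from the weighted moment and then recover \eqref{mapa} as the case $n=0$. Both reductions are immediate; your write-up supplies the explicit calculation the paper leaves to the reference.
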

\noindent Note that the property \eqref{mapc} follows from \eqref{mapa} and \eqref{mapb}.
Finally, we recall the definition of the Cauchy-Szegö integral operator $S$. 
Let $f\in L^1(\partial\mathbb{D})$ and
\begin{equation}
\label{szego}
(Sf)(z):=\frac{1}{2\pi}\int_{0}^{2\pi}\frac{f(e^{i\theta})}{1-ze^{-i\theta}}d\theta, \;\;\;z\in \mathbb{D}.
\end{equation}
Due to the Cauchy integral formula, $S$ has the reproducing property, i.e., if $f:\overline{\mathbb{D}}\rightarrow \mathbb{C}$ is continuous on $\overline{\mathbb{D}}$ and analytic on $\mathbb{D}$, then $(Sf)(z)=f(z)$. 	 
\section{Proof of Theorem \ref{main1}}
Before the actual proof of the main theorem we make a few more simplifications in the notation. Let $z\in \Omega$ and $S_n=S_n(x_n+iy_n,\rho_n)$ (see \eqref{Sn}). We denote
\begin{equation}
\label{Fn}
F_nf(z):=\int_{S_n}K_{\Omega}(z,w)a(w)f(w)dA(w),
\end{equation}
so that (see \eqref{generalizedT}) 
\begin{equation}
\label{Fnomega}
(T_{a,\Omega}f)(z)=\sum_{n=1}^{\infty}F_{n}f(z)\;\;\;\text{for all}\;\;\;z\in \Omega.
\end{equation}
Integration by parts as in \cite{mannersalo} yields
\begin{equation}
\label{Fnexpression}
F_nf(z)=F_{1,n}f(z)-F_{2,n}f(z)-F_{3,n}f(z)+F_{4,n}f(z),
\end{equation}
where
\begin{equation}
\label{F1omega}
F_{1,n}f(z):=\left(\int_{y_n}^{y_n'}\int_{x_n}^{x_n'}a(x+iy)dxdy\right)f(x_n'+iy_n')K_{\Omega}(z,x_n'+iy_n'),
\end{equation}
\begin{equation}
\label{F2omega}
F_{2,n}f(z):=\intop_{y_n}^{y_n'}
\left(\int_{y_n}^{y}\int_{x_n}^{x_n'}a(x+iy)dxdy\right)
\partial_yf(x_n'+iy)K_{\Omega}(z,x_n'+iy)dy,
\end{equation}
\begin{equation}
\label{F3omega}
F_{3,n}f(z):=\intop_{x_n}^{x_n'}\left(\intop_{y_n}^{y_n'}\intop_{x_n}^{x}a(x+iy)dxdy \right)
\partial_{x}f(x+iy_n')K_{\Omega}(z,x+iy_n')dx,
\end{equation}
\begin{equation}
\label{F4omega}
F_{4,n}f(z):=\intop_{x_n}^{x_n'}\intop_{y_n}^{y_n'}\left(\intop_{y_n}^{y}\intop_{x_n}^{x}a(x+iy)dxdy \right)
\partial_{y}\partial_{x}f(x+iy)K_{\Omega}(z,x+iy)dydx,
\end{equation}
where $x_n'+iy_n'=(x_n+\rho_n)+i(y_n+\rho_n)\in S_n$ in the notation \eqref{Sn} and $K_{\Omega}$ is the Bergman kernel \eqref{kernel}.

We now form bounds for $|F_{k,n}f(z)|$ ($k=1,2,3,4$), for details see \cite[p.862--863]{mannersalo}. We only present the calculations for $|F_{1,n}f|$ and $|F_{4,n}f|.$ 
Let us fix $z\in \Omega$ and define an analytic function
\[
h_z(w):=\frac{\overline{\varphi'(z)}\varphi'(w)f(w)}{(1-\overline{\varphi(z)}\varphi(w))^2},\;\;w\in \Omega.
\] 
The estimate for $|F_{1,n}f(z)|$ is now obtained by applying \eqref{symbol} and \eqref{meanvalue} to the function $h_z$. We denote $z'_n=x'_n+iy'_n$. Thus, for all $z\in \Omega$
\begin{align}
\label{F1}
|F_{1,n}f(z)|&\leq\left|\int_{y_n}^{y_n'}\int_{x_n}^{x_n'}a(x+iy)dxdy\right|\frac{|f(z_n')||\varphi'(z)||\overline{\varphi'(z_n')}|}{|1-\varphi(z)\overline{\varphi(z_n')}|^2}
\nonumber
\\
&\nonumber=\left|\int_{y_n}^{y_n'}\int_{x_n}^{x_n'}a(x+iy)dxdy\right||h_z(z_n')|\leq
\mathbf{C}|S_n|\frac{C}{|S_n|}\int_{\tilde{S}_n}|h_z(w)|dA(w)
\\
&=
\mathbf{C}|S_n|\frac{C}{|S_n|}\int_{\tilde{S}_n}\frac{|\varphi'(z)||\overline{\varphi'(w)}||f(w)|}{|1-\varphi(z)\overline{\varphi(w)}|^2}dA(w) 
=C'\int_{\tilde{S_n}}|K_{\Omega}(z,w)||f(w)|dA(w).
\end{align}
For $|F_{k,n}f(z)|$, $k=2,3,$ we obtain the bound
\begin{align}
\label{F3}
|F_{k,n}f(z)|&\leq C''
\int_{\tilde{S}_n}|K_{\Omega}(z,w)|\Big(v(w)|f'(w)|+|f(w)|\Big)
dA(w),
\end{align}
where $v(w)=\text{dist}(w,\partial \Omega)$.
The inequality \eqref{F3} is based on \eqref{schwarz2}, \eqref{symbol}, \eqref{meanvalue} and the estimates
$\rho_n\sim (1-|\varphi(w)|^2)|\varphi'(w)|^{-1}\sim v(w)$ (see \eqref{rho_1}), $\rho_n|\varphi'(w)|\leq C|1-\varphi(z)\overline{\varphi(w)}|$ (see \eqref{rho_4}) and $\rho_n|\varphi''(w)|\leq C'|\varphi'(w)|$ (see \eqref{rho_5}).
\allowdisplaybreaks

Applying \eqref{symbol} and \eqref{meanvalue} (to analytic functions which vary in terms of the following sum) gives
the bound for $|F_{4,n}f(z)|$ as follows. 
Recall the notation 
$G_{\Omega}(z,w)=1-\varphi(z)\overline{\varphi(w)}.$ For all $z\in \Omega$ ($w=x+iy$)
\begin{align}
\nonumber
|F_{4,n}f(z)|
&\leq  |\varphi'(z)| \intop_{x_n}^{x_n'}\intop_{y_n}^{y_n'}\left|\intop_{y_n}^{y}\intop_{x_n}^{x}a(w)dxdy \right|
\left|\partial_{y}\partial_{x}\frac{f(w)\overline{\varphi'(w)}}{(1-\varphi(z)\overline{\varphi(w)})^2}\right|dydx
\\
\nonumber
&\leq
C'|\varphi'(z)|
\int_{\tilde{S}_n}|G_{\Omega}(z,w)|^{-2}
\left(
\rho_n^2|\overline{\varphi'(w)}||f''(w)|
+
\rho_n^2|\overline{\varphi''(w)}||f'(w)|
+
\frac{\rho_n^2|\varphi(z)||\overline{\varphi'(w)}|^2|f'(w)|}{|1-\varphi(z)\overline{\varphi(w)}|}
\right.
\\
\nonumber
&+
\left.
\rho_n^2|\overline{\varphi''(w)}||f'(w)|
+
\rho_n^2|\overline{\varphi'''(w)}||f(w)|
+
\frac{\rho_n^2|\varphi(z)||\overline{\varphi'(w)}||\overline{\varphi''(w)}||f(w)|}{|1-\varphi(z)\overline{\varphi(w)}|}
\right.
\\
\nonumber
&+
\left.
\frac{\rho_n^2|\varphi(z)||\overline{\varphi'(w)}|^2|f'(w)|}{|1-\varphi(z)\overline{\varphi(w)}|}
+
\frac{\rho_n^2|\varphi(z)||\overline{\varphi'(w)}||\overline{\varphi''(w)}||f(w)|}{|1-\varphi(z)\overline{\varphi(w)}|}
+
\frac{\rho_n^2|\varphi(z)||\overline{\varphi'(w)}|^3|f(w)|}{|1-\varphi(z)\overline{\varphi(w)}|^2}
\right)dA(w)
\\
\label{F4}
\nonumber
&\leq C''\int_{\tilde{S}_n}
|\varphi'(z)||\overline{\varphi'(w)}||G_{\Omega}(z,w)|^{-2}\Big(v(w)^2|f''(w)|+v(w)|f'(w)|+|f(w)|\Big)dA(w)
\\
&\leq
C'''
\int_{\tilde{S}_n}
|K_{\Omega}(z,w)|\Big(v(w)^2|f''(w)|+v(w)|f'(w)|+|f(w)|\Big)dA(w),
\end{align}
where we have applied the estimates $\rho_n\sim (1-|\varphi(w)|^2)|\varphi'(w)|^{-1}\sim v(w)$,
$\rho_n|\varphi'(w)|\leq C|1-\varphi(z)\overline{\varphi(w)}|$ (see \eqref{rho_4}),
$\rho_n|\varphi''(w)|<C|\varphi'(w)|$ and $\rho_n^2|\varphi'''(w)|<C|\varphi'(w)|$ (see \eqref{rho_5}).

By combining the estimates \eqref{F1}--\eqref{F4} we obtain: there exists a constant $C>0$ such that for all $n$ and all $z \in \Omega$
\begin{equation}
\label{Fnestimate}
|F_nf(z)|\leq C\int_{\tilde{S}_n}
|K_{\Omega}(z,w)|\Big(|f(w)|+v(w)|f'(w)|+v(w)^2|f''(w)|\Big)dA(w),
\end{equation}
where $v(w)=\text{dist}(w,\partial\Omega)$.
\paragraph{Proof of Theorem \ref{main1}}
\begin{proof}
	Let $f \in A^p(\Omega)$ and $v(w)=\text{dist}(w,\partial\Omega).$  Recall the definition of $F_nf(z)$ (see \eqref{Fn} and \eqref{Fnomega}). We obtain
	\begin{align}
	\nonumber
	\left|(T_{a,\Omega}f)(z)\right|&=\left|\sum_{n=1}^{\infty} \int_{S_n}K_{\Omega}(z,w)f(w)a(w)dA(w)\right|=\left|\sum_n F_{n}f(z)\right|
	\leq \sum_n |F_{n}f(z)|
	\\
	\nonumber
	&\leq
	144C \int_{\Omega}\left|K_{\Omega}(z,w)\right|\Big(|f(w)|+|f'(w)|v(w)+|f''(w)|v(w)^2\Big)dA(w)\\\
	\label{finite}
	&=
	144C P_{\Omega}^+\big(|f|+|f'|v+|f''|v^2\big)(z)< \infty,
	\end{align}
	where we have applied \eqref{Fnestimate}, and the domain $\Omega$ has been covered by partly overlapping squares $\tilde{S}_n$, see Lemma \ref{whitney} (d). The integral on the second line is finite: $|1-\varphi(z)\overline{\varphi(w)}| \geq 1-|\varphi(z)|$ for all $w\in \Omega$ and by Hölder's inequality we have ($1/p+1/q=1$)
	\begin{eqnarray}
	\nonumber
	\label{hölder}
	\int_{\Omega}\left|f(w)\right|\left|\varphi'(w)\right|dA(w) &\leq&
	\left(\int_{\Omega}|f(w)|^pdA(w)\right)^{1/p}\left(\int_{\Omega}|\varphi'(w)|^qdA(w)\right)^{1/q}
	\\\nonumber
	&=&\left\Vert f\right\Vert_{p,\Omega}\left(\int_{\mathbb{D}}|\psi'(\xi)|^{2-q}dA(\xi)\right)^{1/q}
	\\
	&=&\left\Vert f\right\Vert_{p,\Omega}\left(\int_{\mathbb{D}}\big|\prod_{k=1}^{n}(1-\overline{\xi}_k\xi)^{\alpha_k-1}\big|^{(p-2)/(p-1)}dA(\xi)\right)^{1/q}<\infty,
	\end{eqnarray}
	where we have changed variables $w=\psi(\xi)$ and recalled the formula \eqref{schwarz1}. The convergence in \eqref{hölder} is based on the fact that integral of the form (for a fixed $\theta$) $\int_{\mathbb{D}}|1-e^{i\theta}\xi|^{-s}dA(\xi)<\infty$ if and only if $s<2$ \cite[p.78--79]{duren} (note that $\overline{\xi}_k\in \partial\mathbb{D}$ for all $k$). We have also recalled the assumption $\boldsymbol{\alpha_m}<1+2(p-1)/(2-p)$ if $1<p< 4/3$ in Theorem \ref{main1}, so that for all exponents $(\alpha_k-1)(p-2)/(p-1)>-2$ as is necessary. If $p\geq 4/3$, then $(\alpha_k-1)(p-2)/(p-1)>-2$ since $(p-2)/(p-1)\geq -2$ and always $-1<\alpha_k-1<1$.  
	According to Lemma \ref{corderivative}, $|f'|v$ and $|f''|v^2$ also belong to $L^p(\Omega)$. Thus, the integral in \eqref{finite} is clearly finite. This proves that the sum \eqref{generalizedT2} converges pointwise and absolutely. 
	
	Because of the assumptions for the maximum angle, the Bergman projection $P_{\Omega}$ is now bounded (see Theorem \ref{Regulated}).
	The maximal Bergman projection  $P_{\Omega}^{+}$ is bounded as well and we get
	\begin{align*}
	\left\Vert T_{a,\Omega}f\right\Vert_p
	&\leq 
	C'
	\left\Vert P_{\Omega}^+\big(\left|f\right|+\left|f'\right|v+\left|f''\right|v^2\big)
	\right\Vert_p
	\\
	&
	\leq C'
	\big(\left\Vert P_{\Omega}^+(\left|f\right|)\right\Vert_p+\left\Vert P_{\Omega}^+(\left|f'\right|v)\right\Vert_p+\left\Vert P_{\Omega}^+(\left|f''\right|v^2)\right\Vert_p\big)
	\leq C''\left\Vert f\right\Vert_p,
	\end{align*}
	where the last inequality follows from Lemma \ref{corderivative}. This proves the boundedness of $T_{a,\Omega}.$

	Next, we show that $T_{a,\Omega}^{(m)}\rightarrow T_{a,\Omega}$ strongly (in SOT) as $m\rightarrow \infty$. Let $f\in A^p(\Omega)$ and $z\in \Omega.$  Due to \eqref{Fn} and \eqref{Fnestimate}, we have
	\begin{align}
	\label{SOT}
	\nonumber
	&\left|(T_{a,\Omega}f)(z)-(T_{a,\Omega}^{(m)}f)(z)\right|=\left|\sum_{n=m+1}^{\infty}(F_nf)(z)\right|\leq\sum_{n=m+1}^{\infty}|(F_nf)(z)| \\
	&\quad\leq \int_{\bigcup_{n>m} \tilde{S}_n}\left|K_{\Omega}(z,w)\right|\big(|f(w)|+|f'(w)|v(w)+|f''(w)|v(w)^2\big)dA(w).
\end{align}
Let $\mbox{\large$\chi$}_{V_{m}}$ be the characteristic function of the set $\bigcup_{n>m} \tilde{S}_n=:V_{m}$. Since every $z\in \Omega$ belongs to at most 144 of the squares $\tilde{S}_n$ (see Lemma \ref{whitney} property (d)), obviously $\mbox{\large$\chi$}_{V_{m}}(z)\rightarrow 0$ as $m\rightarrow \infty$ for all $z$.
We can now continue from \eqref{SOT}:
	\begin{eqnarray}
	\label{SOT2}
	\nonumber
	\left|(T_{a,\Omega}f)(z)-(T_{a,\Omega}^{(m)}f)(z)\right| 
	\nonumber
	&\leq& \int_{\Omega}\mbox{\large$\chi$}_{V_{m}}(w)\left|K_{\Omega}(z,w)\right|\big(|f(w)|+|f'(w)|v(w)+|f''(w)|v(w)^2\big)dA(w)
	\\
	&=& P_{\Omega}^+\big(\mbox{\large$\chi$}_{V_{m}}(|f|+|f'|v+|f''|v^2)\big)(z)=P_{\Omega}^+(\mbox{\large$\chi$}_{V_{m}}g)(z),
	\end{eqnarray}
	where $g:=\left|f\right|+\left|f'\right|v+\left|f''\right|v^2.$
	It follows from the Lebesgue's dominated convergence theorem that $\left\Vert \chi_{V_{m}}g\right\Vert_p\rightarrow 0$ as $m\rightarrow \infty.$ Since the maximal Bergman projection $P_{\Omega}^+$ is bounded, also $\left\Vert P_{\Omega}^+(\chi_{V_{m}}g)\right\Vert_p\rightarrow 0$ as $m\rightarrow \infty.$ Applying this to the inequality \eqref{SOT2} implies the claim.
	
	Finally, we remark that $T_{a,\Omega}f$ is an analytic function since $F_nf$ are clearly analytic and $(T_{a,\Omega}^{(m)}f)(z)$ converges uniformly to $(T_{a,\Omega}f)(z)$ on compact subsets of $\Omega$ as $m\rightarrow \infty$. This follows from \eqref{SOT} by taking into account that $|\varphi'(z)|/|1-\varphi(z)\overline{\varphi(w)}|^2\leq|\varphi'(z)|/(1-|\varphi(z)|)^2$, where the right-hand side is bounded on compact subset of $\Omega$. 
	Moreover, the integral $\int_{\Omega}|\varphi'(w)|\big(|f(w)|+|f'(w)|v(w)+|f''(w)|v(w)^2\big)dA(w)$ is finite, see  \eqref{hölder} and the comments after it. This completes the proof.
\end{proof}
\section{Boundedness results in the case of unbounded Bergman projection}
If $p> 4$ or $p< 4/3$ and polygon $\Omega$ has big enough inward corners ($1<\alpha<2)$, then the Bergman projection $P_{\Omega}$ is unbounded  according to Theorem \ref{Regulated}. In this special case the situation is more complicated as we will see in the last section. However, when $p>4,$ it is possible to prove a boundedness result in a weighted Bergman space $A_{\omega}^p(\Omega)$ with the weight  $w(w)=|G_{\Omega}(w,\boldsymbol{w_m})|^{t}$ (Proposition \ref{main2}). In the case $p<4/3$, we obtain a bounded Toeplitz operator by strengthening the condition \eqref{symbol}. In addition to boundedness, we require the ''average'' of symbol, $|\hat{a}_S(z')|$, to converge to zero (at pace $|G_{\Omega}(z',\boldsymbol{w_m})|^t$)
when approaching the vertex $\boldsymbol{w_{m}}$ (Proposition \ref{main3}).
In both cases, we deal with a polygon that has only one ''tricky'' corner.
Recall that $\boldsymbol{\alpha_m}=\max_{k}(\alpha_k)$ and $G_{\Omega}(z,w)=1-\varphi(z)\overline{\varphi(w)}.$
\begin{prop}
	\label{main2}
	Let $p>4$ and let $\Omega\subset \mathbb{C}$ be a polygon with corners $\alpha_1\pi,...,\alpha_n\pi$ $(0<\alpha_k<2,$ $\alpha_k\neq 1)$ at vertices $w_1,...,w_n\in \partial\Omega$.
	 Suppose that $\boldsymbol{\alpha_m}\geq 1+\frac{2}{p-2}$ and $\alpha_k< 1+\frac{2}{p-2}$ for all $k\neq \boldsymbol{m}$.
	 Let $a \in L_{loc}^1(\Omega)$ and assume that the condition \eqref{symbol} holds.
	Then the sum \eqref{generalizedT2} converges absolutely for all $z \in \Omega$ and the generalized Toeplitz operator $T_{a,\Omega}$
	is a bounded operator from $A_{\omega}^p(\Omega)$ into $A_{\omega}^p(\Omega)$, where  $\omega(w)=|G_{\Omega}(w, \boldsymbol{w_m})|^{t}$ and $t> (p-2)(\boldsymbol{\alpha_m}-1)-2$.	
\end{prop}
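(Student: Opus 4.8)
The plan is to mimic the proof of Theorem \ref{main1}, but to keep track of a weight. The pointwise estimate \eqref{Fnestimate}, namely
\[
|F_nf(z)|\leq C\int_{\tilde S_n}|K_\Omega(z,w)|\big(|f(w)|+v(w)|f'(w)|+v(w)^2|f''(w)|\big)\,dA(w),
\]
was derived using only the symbol condition \eqref{symbol}, the mean value property \eqref{meanvalue}, and the estimates \eqref{rho_1}, \eqref{rho_4}, \eqref{rho_5} on $\varphi',\varphi'',\varphi'''$; none of these used any bound on $\boldsymbol{\alpha_m}$, so they remain valid here. Summing over $n$ and using Lemma \ref{whitney}(d) as in \eqref{finite} gives
\[
|(T_{a,\Omega}f)(z)|\leq 144\,C\,P_\Omega^+\big(|f|+|f'|v+|f''|v^2\big)(z).
\]
So the two things to prove are (i) absolute convergence of the defining sum for each fixed $z\in\Omega$, and (ii) the weighted norm bound $\|P_\Omega^+ g\|_{p,\omega}\leq C\|f\|_{p,\omega}$ for $g=|f|+|f'|v+|f''|v^2$.

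For (i): fix $z\in\Omega$; then $|K_\Omega(z,w)|\leq C_z|\overline{\varphi'(w)}|$, so it suffices to show $\int_\Omega|\varphi'(w)|\,g(w)\,dA(w)<\infty$ for $f\in A^p_\omega(\Omega)$. I would change variables $w=\psi(\xi)$ and split by a weighted H\"older inequality: with $1/p+1/q=1$, write $|\varphi'|g=\big(g\,\omega^{1/p}\big)\cdot\big(|\varphi'|\,\omega^{-1/p}\big)$ and bound the first factor by the (finite, by the analogues of Lemma \ref{corderivative} — see below) $L^p_\omega$ norm; the second factor must lie in $L^q(\Omega)$, which after the change of variables becomes an integral of $\prod_k|1-\bar\xi_k\xi|^{(\alpha_k-1)q}$ against $|1-\bar\xi_m\xi|^{-tq/p}$ over $\mathbb D$. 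Local integrability near each $\xi_k$ ($k\neq m$) holds because $\alpha_k<1+\frac2{p-2}$ forces $(\alpha_k-1)q>-2$, and near $\xi_m$ the exponent is $(\alpha_m-1)q-tq/p$, which exceeds $-2$ precisely when $t>p(\alpha_m-1)-2p/q=p(\alpha_m-1)-2(p-1)$; one checks $t>(p-2)(\alpha_m-1)-2$ implies this (indeed the hypothesis range $t>(p-2)(\alpha_m-1)-2$ is the binding one). I would also need a weighted version of Lemma \ref{corderivative}, i.e. $\|v f'\|_{p,\omega}+\|v^2 f''\|_{p,\omega}\leq C\|f\|_{p,\omega}$; this follows by the same change of variables $z=\varphi(w)$ used there, which turns $A^p_\omega(\Omega)$ into a weighted Bergman space $A^P_W(\mathbb D)$ with $W(\xi)=|\psi'(\xi)|^{2-p}|1-\varphi(\psi(\xi))\overline{\varphi(w_m)}|^t=|\psi'(\xi)|^{2-p}|1-\bar\xi_m\xi|^t$, and the derivative estimates \eqref{intersection}, \eqref{intersection2} are unaffected by the weight.

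For (ii): the decisive point is that $P_\Omega^+$ is bounded on $L^p_\omega(\Omega)$. By the change-of-variables principle recalled before Theorem \ref{Regulated}, $P_\Omega^+:L^p_\omega(\Omega)\to L^p_\omega(\Omega)$ is bounded iff $P_{\mathbb D}^+$ is bounded on $L^p_W(\mathbb D)$ with $W(\xi)=|\psi'(\xi)|^{2-p}|1-\bar\xi_m\xi|^{t}=|A|^{2-p}|1-\bar\xi_m\xi|^{t+(2-p)(\alpha_m-1)}\prod_{k\neq m}|1-\bar\xi_k\xi|^{(2-p)(\alpha_k-1)}$, which in turn (B\'ekoll\'e--Bonami, \cite{bekolle1}) is equivalent to $W$ satisfying the $B_p$ condition on $\mathbb D$. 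A weight of the form $\prod_j|1-\bar\xi_j\xi|^{s_j}$ with $\xi_j\in\partial\mathbb D$ is $B_p$ on $\mathbb D$ iff $-2<s_j<2(p-1)$ for every $j$ (standard power-weight computation on the disk). For $k\neq m$ the exponent is $(2-p)(\alpha_k-1)$; since $p>4>2$ and $\alpha_k<1+\frac2{p-2}$ we get $(2-p)(\alpha_k-1)>-2$, and the upper bound $(2-p)(\alpha_k-1)<2(p-1)$ is automatic as $\alpha_k>0$. For the critical exponent $s_m=t+(2-p)(\alpha_m-1)$: the lower bound $s_m>-2$ is exactly the hypothesis $t>(p-2)(\alpha_m-1)-2$, and the upper bound $s_m<2(p-1)$ holds for the naturally arising symbols (and can be imposed, or it follows from the other hypotheses together with $\alpha_m<2$; one notes $t$ need only be taken slightly above the threshold). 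Granting this, $P_\Omega^+$ is bounded on $L^p_\omega(\Omega)$, so
\[
\|T_{a,\Omega}f\|_{p,\omega}\leq C'\big(\|P_\Omega^+(|f|)\|_{p,\omega}+\|P_\Omega^+(|f'|v)\|_{p,\omega}+\|P_\Omega^+(|f''|v^2)\|_{p,\omega}\big)\leq C''\|f\|_{p,\omega}
\]
by the weighted Lemma \ref{corderivative}. The main obstacle is precisely the verification that the pulled-back weight $|\psi'|^{2-p}\omega\circ\psi$ is a B\'ekoll\'e--Bonami $B_p$ weight on $\mathbb D$: one must identify the exact exponent $t+(2-p)(\alpha_m-1)$ appearing at $\xi_m$, confirm the threshold matches $t>(p-2)(\alpha_m-1)-2$ (the lower $B_p$ bound), and handle the other vertices via $\alpha_k<1+\frac2{p-2}$; everything else is the routine weighted repetition of Section 3.
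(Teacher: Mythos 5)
Your overall strategy coincides with the paper's: reuse the pointwise bound \eqref{Fnestimate}, pull everything back to the disk, and reduce the matter to the boundedness of $P^+_{\mathbb D}$ on $L^p_W(\mathbb D)$ with $W(\xi)=|\psi'(\xi)|^{2-p}|1-\overline{\xi}_m\xi|^{t}$. Where you invoke the B\'ekoll\'e--Bonami condition for product power weights directly, the paper reaches the same conclusion by feeding $\psi'(\xi)|1-\overline{\xi}_m\xi|^{t/(2-p)}$ into Corollary \ref{unit}; these are the same computation, and your identification of the exponent $t+(2-p)(\boldsymbol{\alpha_m}-1)$ at $\xi_m$ and of the lower threshold $t>(p-2)(\boldsymbol{\alpha_m}-1)-2$ is correct. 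Your remark that one also needs the upper bound $t+(2-p)(\boldsymbol{\alpha_m}-1)<2(p-1)$, i.e.\ $t<(p-2)(\boldsymbol{\alpha_m}-1)+2(p-1)$, is a fair one which the paper passes over silently: neither proof actually covers $t$ above that window.

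Two steps of your write-up need repair. First, the exponent bookkeeping in the weighted H\"older argument for pointwise convergence is off: after the change of variables the integrand is $\prod_k|1-\overline{\xi}_k\xi|^{(\alpha_k-1)(2-q)}\,|1-\overline{\xi}_m\xi|^{-tq/p}$, not $\prod_k|1-\overline{\xi}_k\xi|^{(\alpha_k-1)q}$, and since the weight enters with a \emph{negative} power the integrability condition near $\xi_m$ is again an upper bound on $t$ (the same $t<(p-2)(\boldsymbol{\alpha_m}-1)+2(p-1)$), not a lower bound implied by the hypothesis as you assert; the "binding'' constraint here is the opposite of the one you name. Second, the weighted analogue of Lemma \ref{corderivative} does not follow merely "by the same change of variables,'' because Lemma \ref{norms} is an unweighted Littlewood--Paley estimate on $A^p(\mathbb D)$ and cannot be applied in $A^p_W(\mathbb D)$ without further justification. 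The paper's device, in display \eqref{vert}, is to observe that $\omega^{1/p}f=(1-\varphi(w)\overline{\varphi(\boldsymbol{w_m})})^{t/p}f$ is a single-valued analytic function on $\Omega$ with $\Vert\omega^{1/p}f\Vert_p=\Vert f\Vert_{p,\omega}$, to differentiate this product so that the unweighted Lemma \ref{corderivative} applies to $\omega^{1/p}f$, and to control the commutator term using $\omega^{-1/t}v|\varphi'|\le C$, which follows from \eqref{weight}. You should supply this (or an equivalent) argument; with these two points fixed, the rest of your proposal goes through and matches the paper's proof.
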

\begin{prop}
	\label{main3}
	Let $1<p<4/3$ and let $\Omega\subset \mathbb{C}$ be a polygon with corners $\alpha_1\pi,...,\alpha_n\pi$ $(0<\alpha_k<2,$ $\alpha_k\neq 1)$ at vertices $w_1,...,w_n\in \partial\Omega$.
	Suppose that
	$\boldsymbol{\alpha_m}\geq 1+\frac{2(p-1)}{2-p}$ and $\alpha_k< 1+\frac{2(p-1)}{2-p}$ for all $k\neq \boldsymbol{m}$. 
	Let $a \in L_{loc}^1(\Omega)$ and assume that 
	$t> (2-p)(\boldsymbol{\alpha_m}-1)-2(p-1)$ and 
	\begin{equation}
	\label{symbol3}
	|\hat{a}_S(z')|\leq \mathbf{C}|G_{\Omega}(z',\boldsymbol{w_m})|^{t}
	\end{equation}
	for all $z'\in S$ and all squares $S\subset\Omega$ with  $\sqrt{2}\rho\leq\text{dist}(S,\partial\Omega)\leq 4\sqrt{2} \rho$. 
	Then the sum \eqref{generalizedT2} converges absolutely for all $z \in \Omega$ and the generalized Toeplitz operator $T_{a,\Omega}$
	is a bounded operator from $A^p(\Omega)$ into $A^p(\Omega)$.	
\end{prop}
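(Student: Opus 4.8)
The plan is to follow the proof of Theorem~\ref{main1} almost verbatim, carrying along the extra decay factor $|G_{\Omega}(\cdot,\boldsymbol{w_m})|^{t}$ furnished by \eqref{symbol3}, and at the end to replace the (here false) boundedness of $P_{\Omega}^{+}$ on $L^{p}(\Omega)$ by a weighted estimate obtained after passing to the disk. First I would repeat the integration by parts from \cite{mannersalo} that yields \eqref{Fnexpression}--\eqref{F4omega} and the bound \eqref{Fnestimate}, but using \eqref{symbol3} instead of \eqref{symbol}: each double integral of $a$ over a sub-rectangle of $S_{n}$ is now dominated by $\mathbf{C}\,|S_{n}|\,|G_{\Omega}(z',\boldsymbol{w_m})|^{t}$ for some $z'\in S_{n}$. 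Since $|G_{\Omega}(w,\boldsymbol{w_m})|=|\varphi(w)-\varphi(\boldsymbol{w_m})|$ is comparable to a fixed constant as $w$ ranges over a single Whitney square $\tilde{S}_{n}$ (a standard consequence of the Koebe distortion theorem and Lemma~\ref{whitney}, already used implicitly in \cite{mannersalo}), this factor may be moved, up to a constant, inside the mean-value integral \eqref{meanvalue} as $|G_{\Omega}(w,\boldsymbol{w_m})|^{t}$; the auxiliary estimates \eqref{rho_1}, \eqref{rho_4}, \eqref{rho_5} are unaffected. One thus reaches
\[
|F_{n}f(z)|\le C\int_{\tilde{S}_{n}}|K_{\Omega}(z,w)|\,|G_{\Omega}(w,\boldsymbol{w_m})|^{t}\bigl(|f(w)|+v(w)|f'(w)|+v(w)^{2}|f''(w)|\bigr)\,dA(w),
\]
and, summing over $n$ and using the bounded overlap of Lemma~\ref{whitney}(d) as in \eqref{finite},
\[
|(T_{a,\Omega}f)(z)|\le 144\,C\,P_{\Omega}^{+}\bigl(|G_{\Omega}(\cdot,\boldsymbol{w_m})|^{t}g\bigr)(z),\qquad g:=|f|+v|f'|+v^{2}|f''|.
\]
Pointwise absolute convergence of \eqref{generalizedT2} follows for fixed $z$ from $|K_{\Omega}(z,w)|\le C_{z}|\varphi'(w)|$, H\"older's inequality (with $1/p+1/q=1$) and the substitution $w=\psi(\xi)$, exactly as in the proof of Theorem~\ref{main1}: the resulting integral $\int_{\mathbb{D}}|\psi'(\xi)|^{2-q}|1-\xi\,\overline{\varphi(\boldsymbol{w_m})}|^{tq}\,dA(\xi)$ is finite, because $t>(2-p)(\boldsymbol{\alpha_m}-1)-2(p-1)$ forces the exponent of the $\varphi(\boldsymbol{w_m})$-factor to exceed $-2$, while the exponents of the remaining factors exceed $-2$ thanks to $0<\alpha_{k}<1+\tfrac{2(p-1)}{2-p}$; also $g\in L^{p}(\Omega)$ by Lemma~\ref{corderivative}.

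The operator bound then reduces to the single estimate $\bigl\|P_{\Omega}^{+}(|G_{\Omega}(\cdot,\boldsymbol{w_m})|^{t}g)\bigr\|_{p,\Omega}\le C\,\|g\|_{p,\Omega}$, which together with $\|g\|_{p}\le C\|f\|_{p}$ (Lemma~\ref{corderivative}) gives $\|T_{a,\Omega}f\|_{p}\le C\|f\|_{p}$. Changing variables $w=\psi(\xi)$, $z=\psi(\eta)$ and using $\varphi'\circ\psi=1/\psi'$ and $dA(w)=|\psi'|^{2}\,dA(\xi)$, a direct computation rewrites the left-hand side as $\|P_{\mathbb{D}}^{+}\Phi\|_{p,\mu}$, where $\mu(\eta):=|\psi'(\eta)|^{2-p}$, $\Phi(\xi):=|\psi'(\xi)|^{(p-2)/p}|1-\xi\,\overline{\varphi(\boldsymbol{w_m})}|^{t}\tilde{g}(\xi)$ with $\tilde{g}(\xi):=g(\psi(\xi))|\psi'(\xi)|^{2/p}$, and $\|\tilde{g}\|_{p,\mathbb{D}}=\|g\|_{p,\Omega}$; expressing $\tilde{g}$ back through $\Phi$ gives $\|g\|_{p,\Omega}=\|\Phi\|_{p,\nu}$ with $\nu(\xi):=|\psi'(\xi)|^{2-p}|1-\xi\,\overline{\varphi(\boldsymbol{w_m})}|^{-tp}$. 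So everything comes down to the two-weight inequality $\|P_{\mathbb{D}}^{+}\Phi\|_{p,\mu}\le C\|\Phi\|_{p,\nu}$.

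By \eqref{schwarz1}, both weights are, up to a constant, products of powers of the boundary factors $|1-\xi\,\overline{z_{k}}|$: one has $\mu=|A|^{2-p}\prod_{k}|1-\xi\,\overline{z_{k}}|^{(\alpha_{k}-1)(2-p)}$ and $\nu=\mu\cdot|1-\xi\,\overline{\varphi(\boldsymbol{w_m})}|^{-tp}$. I would interpose the weight $\mu':=\mu\cdot|1-\xi\,\overline{\varphi(\boldsymbol{w_m})}|^{s}$, choosing $s$ in the interval
\[
\max\bigl(-tp,\;-2-(2-p)(\boldsymbol{\alpha_m}-1)\bigr)\;<\;s\;<\;2(p-1)-(2-p)(\boldsymbol{\alpha_m}-1),
\]
which is nonempty exactly because $t>(2-p)(\boldsymbol{\alpha_m}-1)-2(p-1)$ (so that $-tp$ is strictly below the right endpoint) and because $-2<2(p-1)$. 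The constraints $s>-tp$ and $s<2(p-1)-(2-p)(\boldsymbol{\alpha_m}-1)\le 0$ (the last inequality using $\boldsymbol{\alpha_m}\ge 1+\tfrac{2(p-1)}{2-p}$) give $\mu\le C_{1}\mu'$ and $\mu'\le C_{3}\nu$ pointwise on $\mathbb{D}$; moreover $\mu'$ is a product of powers $|1-\xi\,\overline{z_{k}}|^{\gamma_{k}}$ all of whose exponents lie in $(-2,2(p-1))$ (those at $z_{k}\ne\varphi(\boldsymbol{w_m})$ because $0<\alpha_{k}<1+\tfrac{2(p-1)}{2-p}$, and the one at $\varphi(\boldsymbol{w_m})$, equal to $(\boldsymbol{\alpha_m}-1)(2-p)+s$, by the choice of $s$), hence $\mu'$ satisfies the B\'ekoll\'e--Bonami condition and $P_{\mathbb{D}}^{+}$ is bounded on $L^{p}_{\mu'}(\mathbb{D})$ — this is the criterion underlying Theorem~\ref{Regulated}, and boundedness of $P_{\mathbb{D}}^{+}$ and of $P_{\mathbb{D}}$ on $L^{p}_{\omega}(\mathbb{D})$ are equivalent \cite{bekolle1}. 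Chaining,
\[
\|P_{\mathbb{D}}^{+}\Phi\|_{p,\mu}\le C_{1}^{1/p}\|P_{\mathbb{D}}^{+}\Phi\|_{p,\mu'}\le C_{1}^{1/p}C_{2}\|\Phi\|_{p,\mu'}\le C_{1}^{1/p}C_{2}C_{3}^{1/p}\|\Phi\|_{p,\nu},
\]
which is the required two-weight bound; undoing the change of variables yields $\|T_{a,\Omega}f\|_{p}\le C\|f\|_{p}$, and the argument of Theorem~\ref{main1}, carried out with the extra factor $|G_{\Omega}(\cdot,\boldsymbol{w_m})|^{t}$ and the observation that $\chi_{V_{m}}g\to 0$ in $L^{p}(\Omega)$, also gives $T_{a,\Omega}^{(m)}\to T_{a,\Omega}$ strongly.

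I expect the two-weight estimate of the last paragraph to be the main obstacle: one must recognize that the decay exponent $t$ admitted by \eqref{symbol3} is precisely what is needed, after the conformal change of variables, to push the exponent of the offending corner $\varphi(\boldsymbol{w_m})$ below the B\'ekoll\'e--Bonami threshold $2(p-1)$ while keeping it above the integrability threshold $-2$ — this is exactly the non-emptiness of the interval for $s$, and it is where the hypothesis on $t$ is consumed in full. The bookkeeping in the first step (verifying that $|G_{\Omega}(\cdot,\boldsymbol{w_m})|^{t}$ does not disturb the integration by parts and is essentially constant on each Whitney square) is routine but needs care.
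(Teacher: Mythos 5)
Your proposal is correct and follows essentially the same route as the paper: the modified $F_n$-estimate carrying the factor $|G_{\Omega}(\cdot,\boldsymbol{w_m})|^{t}$, the reduction to $P_{\Omega}^{+}$, the conformal transfer to the disk, and the B\'ekoll\'e--Bonami criterion of Corollary \ref{unit} for a power-modified weight. Your interposed weight $\mu'=\mu\,|1-\xi\overline{\varphi(\boldsymbol{w_m})}|^{s}$ is exactly the paper's weight $W=|\psi'|^{2-p}|1-\overline{\xi}_m\xi|^{-t}$ in the special case $s=-t$ (your flexibility in $s$ only tidies up the implicit requirement that the exponent at $\xi_m$ also stay above $-2$).
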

\noindent
The proofs are similar to that of Theorem \ref{main1}, so we state only briefly the main parts and changes.
\paragraph{Proof of Proposition \ref{main2}}
\begin{proof} 
	 The crucial idea is that with the aid of the weight $\omega(w)=|1-\varphi(w)\overline{\varphi(\boldsymbol{w_{m}})}|^{t},$ $t>(p-2)(\boldsymbol{\alpha_m}-1)-2>0$, we obtain a bounded maximal Bergman projection $P_{\mathbb{D}}^+$ on $L^p_{W}(\mathbb{D}),$ where $W(\xi)=|\psi'(\xi)|^{2-p}|1-\overline{\xi}_{m}\xi|^t,$ $\xi \in \mathbb{D}$. 
	Let $f \in A_{\omega}^p(\Omega)$, $z\in \Omega$ and $v(w)=\text{dist}(w,\Omega)$.
	We have as in \eqref{finite}
	\begin{align}
	\nonumber
	|(T_{a,\Omega}f)(z)|&=\left|\sum_{n=1}^{\infty} \int_{S_n}K_{\Omega}(z,w)f(w)a(w)dA(w)\right|\leq\sum_{n=1}^{\infty}|F_nf(z)|
	\\
	\nonumber
	&\leq
	144C \int_{\Omega}|K_{\Omega}(z,w)|\Big(|f|+|f'|v(w)+|f''|v(w)^2\Big)dA(w)
	\\
	\label{finite2}
	&= C' P_{\Omega}^+\Big(|f|+|f'|v+|f''|v^2\Big)(z)< \infty,
	\end{align}
	where the integral is finite as was shown in \eqref{hölder} (note the comments after \eqref{hölder} regarding to the case $p\geq 4/3$).
	We obtain by changing variables $w=\psi(\xi)$, $\xi=\varphi(w)$ (Jacobian $|\psi'(\xi)|^2$),
	\begin{align}
	\left\Vert T_{a,\Omega}f\right\Vert_{p,\omega}
	\nonumber
	&\leq 
	C''
	\left\Vert P_{\Omega}^+\Big(|f|+|f'|v+|f''|v^2\Big)
	\right\Vert_{p,\omega}
	\\
	\label{maximaldisk1}
	&
	=C''\left\Vert P_{\mathbb{D}}^+\Big(|\psi'|\big(|f\circ \psi|+|f'\circ\psi|(v\circ\psi)+|f''\circ\psi|(v\circ \psi)^2\big)\Big)
	\right\Vert_{p,W},
	\end{align}
	where the weight $W(\xi)=|\psi'(\xi)|^{2-p}|1-\overline{\xi}_{m}\xi|^{t}$ and $\xi_m=\varphi(\boldsymbol{w_m})\in \partial\mathbb{D}$.
	For details about the above change of variables, see formula \eqref{change} in the last section.
	Now, recall the condition for the exponent $t$ and the expression of $\psi'(\xi)$ (see \eqref{schwarz1}). $P^+_{\mathbb{D}}$ is clearly bounded on $L^p_{W}(\mathbb{D})$ according to Corollary \ref{unit} (replace $\boldsymbol{\psi'}$ in Corollary \ref{unit} by $\psi'(\xi)|1-\overline{\xi}_{m}\xi|^{t/(2-p)}$ in the case  $p\geq 2$). Remark that $\left\Vert \left|\psi'\right|\left|f\circ \psi\right|\right\Vert_{p,W}=\left\Vert f \right\Vert_{p,\omega}<\infty$ since $f\in A_{\omega}^p(\Omega)$. The other functions in \eqref{maximaldisk1}, inside the brackets, also belong to $L_{W}^p(\mathbb{D})$ and their norms are bounded above by $\left\Vert f \right\Vert_{p,\omega}$. We show this only for  the norm of $\left|\psi'\right|\left|f'\circ\psi\right|(v\circ\psi)$, the norm of $\left|\psi''\right|\left|f''\circ\psi\right|(v\circ\psi)^2$ can be considered in the same way. By changing variables $w=\psi(\xi)$, we obtain (note that the following arguments are similar to those of the proof of Lemma \ref{corderivative}):
	\begin{align}	\nonumber\left\Vert\left|\psi'\right|\left|f'\circ\psi\right|(v\circ\psi)\right\Vert_{p,W}&=\left\Vert vf'\right\Vert_{p,\omega}=\left\Vert \omega^{1/p}vf'\right\Vert_{p}=\left\Vert \big(1-\varphi(w)\overline{\varphi(\boldsymbol{w_{m}})}\big)^{t/p}vf'\right\Vert_{p}\\&\nonumber=\left\Vert vD\big((1-\varphi(w)\overline{\varphi(\boldsymbol{w_{m}})}\big)^{t/p}f\big)+t/p\overline{\varphi(\boldsymbol{w_{m}})}\big(1-\overline{\varphi(\boldsymbol{w_{m}})}\varphi(w)\big)^{t/p-1}vf\varphi'\right\Vert_{p}\\&\nonumber\leq C\left\Vert v D\big(\omega^{1/p}f\big)\right\Vert_{p}+C\left\Vert \omega^{1/p-1/t}vf\varphi'\right\Vert_{p}\\&
	\label{vert}
	\leq C'\left\Vert \omega^{1/p}f\right\Vert_{p}=C'\left\Vert f\right\Vert_{p,\omega}<\infty,
	\end{align} 
	where the last line follows from Lemma \ref{corderivative} and from the fact that  $|(\omega(w))^{-1/t}v(w)\varphi'(w)|\leq C|(\omega(w))^{-1/t}(1-|\varphi(w)|^2)|\leq C'$ for all $w\in\Omega$ (recall \eqref{weight}).
	Thus we can continue from \eqref{maximaldisk1} 
	\begin{align*}
	\left\Vert T_{a,\Omega}f\right\Vert_{p,\omega}
	&
	\leq C'''\left\Vert|\psi'|\Big(|f\circ \psi|	+|f'\circ\psi|(v\circ\psi)+|f''\circ\psi|(v\circ \psi)^2\Big)
	\right\Vert_{p,W}
	\leq C''''\left\Vert f\right\Vert_{p,\omega},
	\end{align*}
	where the last inequality holds since each term is bounded by $\left\Vert f\right\Vert_{p,\omega}$. This proves the claim.
\end{proof}
\paragraph{Proof of Proposition \ref{main3}}
\begin{proof} 
 The key point of the proof is that the strong condition \eqref{symbol3} of the symbol guarantees the convergence of the integral \eqref{finite}.
	
	When bounding $|F_{k,n}f|$ (see \eqref{F1}--\eqref{F4}), we replace the condition $|\hat{a}_S(z')|\leq \mathbf{C}$ by condition $|\hat{a}_S(z')|\leq \mathbf{C}|G_{\Omega}(z',\boldsymbol{w_m})|^{t}$. Let $f \in A^p(\Omega)$ and $z\in \Omega$.
	Instead of \eqref{Fnestimate} we end up in the estimate
	\[
	|F_nf(z)|\leq C\int_{\tilde{S}_n}
\left|K_{\Omega}(z,w)\right|\left|G_{\Omega}(w,\boldsymbol{w_m})\right|^{t}\Big(|f(w)|+v(w)|f'(w)|+v(w)^2|f''(w)|\Big)dA(w).
	\]
	Thus we obtain (compare this to \eqref{finite})
	\begin{align}
	\nonumber
	|(T_{a,\Omega}f)(z)|&=\left|\sum_{n=1}^{\infty} \int_{S_n}K_{\Omega}(z,w)f(w)a(w)dA(w)\right|\leq\sum_{n=1}^{\infty}|F_nf(z)|
	\\
	\nonumber
	&\leq
	144C \int_{\Omega}\left|K_{\Omega}(z,w)\right|\left|G_{\Omega}(w,\boldsymbol{w_m})\right|^{t}\big(|f|+|f'|v+|f''|v^2\big)dA(w)
	\\
	\label{finite3}
	&= C' P_{\Omega}^+\Big(\left|G_{\Omega}(w,\boldsymbol{w_m})\right|^{t}\big(\left|f\right|+\left|f'\right|v+\left|f''\right|v^2\big)\Big)(z)< \infty,
	\end{align}
	where the integral is finite by Hölder's inequality as in \eqref{hölder}. The factor $|G_{\Omega}(w,\boldsymbol{w_m})|^{t}$, with the assumption for $t$, ensures now the convergence of the integral. (See the comments after \eqref{hölder} regarding to the case $1<p<4/3$.)
	Changing variables $\xi=\varphi(w)$ ($\xi_m=\varphi(\boldsymbol{w_m})$) yields
	\begin{align}
	\left\Vert T_{a,\Omega}f\right\Vert_{p}
	\nonumber
	&\leq 
	C''
	\left\Vert P_{\Omega}^+\Big(|G_{\Omega}(w,\boldsymbol{w_m})|^{t}(|f|+|f'|v+|f''|v^2)\Big)
	\right\Vert_{p}
	\\ \nonumber
	&
	=C''\left\Vert P_{\mathbb{D}}^+\Big(|1-\overline{\xi}_{m}\xi|^{t}|\psi'|\big(|f\circ \psi|+|f'\circ\psi|(v\circ\psi)+|f''\circ\psi|(v\circ \psi)^2\big)\Big)
	\right\Vert_{p,|\psi'|^{2-p}}
	\\
	\label{maximaldisk}
	&
	\leq C'''\left\Vert P_{\mathbb{D}}^+\Big(|1-\overline{\xi}_{m}\xi|^{t}|\psi'|\big(|f\circ \psi|+|f'\circ\psi|(v\circ\psi)+|f''\circ\psi|(v\circ \psi)^2\big)\Big)
		\right\Vert_{p,W},
	\end{align}
	where $W(\xi)=|\psi'(\xi)|^{2-p}|1-\overline{\xi}_{m}\xi|^{-t}$. We have added the factor $|1-\overline{\xi}_{m}\xi|^{-t},$ $t>0,$ to the weight $|\psi'(\xi)|^{2-p}$ in the last inequality.
	Now, recall the condition for the exponent $t$ and \eqref{schwarz1}. The operator $P^+_{\mathbb{D}}$ is bounded on $L^p_{W}(\mathbb{D})$ according to Corollary \ref{unit} (replace $\boldsymbol{\psi'}$ in Corollary \ref{unit} by $\psi'(\xi)|1-\overline{\xi}_{m}\xi|^{-t/(2-p)}$ in the case $p\leq 2$). Note also that $|1-\overline{\xi}_{m}\xi|^{t}\left|\psi'\right|\left|f\circ \psi\right|$ belongs to $L_{W}^p(\mathbb{D})$, as well as the other functions in \eqref{maximaldisk}, see Lemma \ref{corderivative}. Hence, we can continue from \eqref{maximaldisk} and make the change of variables $\xi=\varphi(w)$ once again 
	\begin{align*}
	\left\Vert T_{a,\Omega}f\right\Vert_{p}
	&
	\leq C''''\left\Vert\left|1-\overline{\xi}_{m}\xi\right|^{t}|\psi'|\Big(|f\circ \psi|	+|f'\circ\psi|(v\circ\psi)+|f''\circ\psi|(v\circ \psi)^2\Big)
	\right\Vert_{p,W}\\
	&
	= C''''\left\Vert\left|G_{\Omega}(w,\boldsymbol{w_m})\right|^{t}\big(\left|f\right|+\left|f'\right|v+\left|f''\right|v^2\big)
	\right\Vert_{p,\left|G_{\Omega}(w,\boldsymbol{w_m})\right|^{-t}}
	\\
	&
	= C''''\left\Vert\left|G_{\Omega}(w,\boldsymbol{w_m})\right|^{t(1-1/p)}\big(|f|+|f'|v+|f''|v^2\big)
	\right\Vert_{p}
	\leq C'''''\left\Vert f\right\Vert_{p},
	\end{align*}
	where $|G_{\Omega}(w,\boldsymbol{w_m})|^{t(1-1/p)}<C$ since $t(1-1/p)>0$. Thus, the last inequality holds by Lemma \ref{corderivative}. This completes the proof.
\end{proof}
\section{Examples}
In this section we deal with the classical Toeplitz operator $T_a$ (see \eqref{Tomega}) acting on the Bergman space $A^p(\Omega)$ where $\Omega$ is a polygon and $1<p<4/3$ or $p>4$. 
We consider the situation where $\Omega$ has such a large angle that Bergman projection is unbounded.
Despite the unboundedness of the Bergman projection it is possible that Toeplitz operator $T_a:A^p(\Omega)\rightarrow A^p(\Omega)$ is bounded, provided that strong enough conditions are set to the symbol.
It seems that finding good sufficient conditions for the boundedness is even more difficult than in the case of bounded Bergman projection; the question seems to be connected with two-weight inequalities for the Bergman projection, which are not well understood yet. The existing literature seems to contain only results for radial weights \cite{rattya}, which is not sufficient for our purposes. 
The basic problem is the singularity of the Bergman kernel. We will study the question by presenting some examples.

In the first example (Example \eqref{e0}) we consider the case $1<p<4/3$ and the rest deal with the case $p>4$. We assume that the vertex $\boldsymbol{\xi_{m}}\in \partial \Omega$ with the maximum angle $\boldsymbol{\alpha_m}\pi$ is attained at the point $1\in \partial \mathbb{D}$, i.e., $\psi(1)=\boldsymbol{\xi_{m}}$ and $\varphi(\boldsymbol{\xi_m})=1$. (We prefer to use $\xi$ instead of $w$ as a variable of $\Omega$ in this context.)	We need in all the examples the following change of variables formula: $w=\varphi(\xi)$, $z=\varphi(\lambda)$  (with Jacobian $|\psi'|^2$ and $\psi'(w)=(\varphi'(\xi))^{-1}$, $\psi'(z)=(\varphi'(\lambda))^{-1}$),
\begin{align}
\label{change}
\nonumber
\left\Vert T_a(f)\right\Vert^p_{p,\Omega}&=\int_{\Omega}\left|\int_{\Omega}\frac{\varphi'(\lambda)\overline{\varphi'(\xi)}f(\xi)a(\xi)}{\left(1-\varphi(\lambda)\overline{\varphi(\xi)}\right)^2}dA(\xi)\right|^p dA(\lambda)\\&=\int_{\mathbb{D}}|\psi'(z)|^{2-p}\left|\int_{\mathbb{D}}\frac{f(\psi(w))a(\psi(w))\psi'(w)}{(1-z\overline{w})^2}dA(w)\right|^p dA(z).
\end{align}
\begin{exmp}
	\label{e0}
	 Let $1<p<4/3$ and let 
	$\Omega\subset \mathbb{C}$ be a polygon with a large enough inward corner, so that Bergman projection $P_{\Omega}$ is unbounded (see Theorem \ref{Regulated}, case $p\leq 2$). More precisely, let $\boldsymbol{\alpha_m}=\max_k(\alpha_k)>1+\frac{2(p-1)}{2-p}$ and let $0<\alpha_k<1$ for all $k\neq \boldsymbol{m}$, i.e., except for the maximum angle the angles are outward. 
	\begin{enumerate}[$(a)$]
 \item
	Let symbol $a:\Omega\rightarrow \mathbb{C}$ be the constant function $a\equiv 1.$ We show that the Toeplitz operator $T_{a}$ is not even well-defined on $A^p(\Omega)$ (note the assumption for the Bergman kernel $K_{\Omega}(z,\cdot)$ in \cite[Proposition 2.4]{hedenmalm}). Let $f\in A^p(\Omega)$ and $\lambda \in \Omega$. We obtain by changing variables ($w=\varphi(\xi)$) \begin{equation}\label{lemme}(T_af)(\lambda)=(P_{\Omega}f)(\lambda)=\varphi'(\lambda)\int_{\mathbb{D}}\frac{f(\psi(w))\psi'(w)}{(1-\varphi(\lambda)\overline{w})^2}dA(w).\end{equation}
	In other words, $(T_a f)(\lambda) =\varphi'(\lambda)P_{\mathbb{D}}\big((f\circ\psi)\psi'\big)(\varphi(\lambda))$, where $(f\circ\psi)\psi'\in A^p_{|\psi'|^{2-p}}(\mathbb{D})$. 
	It is stated in \cite[Lemme 4]{bekolle1} that $P_{\mathbb{D}}$ is well-defined on $L^p_{\omega}(\mathbb{D})$ only if $\omega^{-1/(p-1)}$ is integrable. 
	Since we consider $P_{\mathbb{D}}$ on the Bergman space $A_{|\psi'|^{2-p}}^p(\mathbb{D})$ instead of $L_{|\psi'|^{2-p}}^p(\mathbb{D})$, we show in detail that the claim holds:

	 Let $f$ be $f(\xi):=(1-\overline{\varphi(\boldsymbol{\xi_m})}\varphi(\xi))^{-1-\boldsymbol{\alpha_m}}$, $\xi\in \Omega$, i.e., $f(\psi(w))=(1-w)^{-1-\boldsymbol{\alpha_m}}$, $w\in \mathbb{D}$. Remark that $f \in A^p(\Omega)$ by the assumption $\boldsymbol{\alpha_m}>1+\frac{2(p-1)}{2-p}$. 
	 Now, the integral in \eqref{lemme} diverges for all $\lambda\in \Omega$, since 	\[
	\int_{\mathbb{D}}\frac{|1-w|^{-1-\boldsymbol{\alpha_m}}|\psi'(w)|}{|1-\varphi(\lambda)\overline{w}|^2}dA(w)\geq C_{\lambda}\int_{B(1,c)\cap\mathbb{D}}|1-w|^{-2}dA(w)=\infty,
	\]
	where $B(1,c)$ is a disk which does not intersect with any of $w_k=\varphi(\xi_k)$, $k\neq \boldsymbol{m}$. We have replaced $|\psi'(w)|$ by $|1-w|^{\boldsymbol{\alpha_{m}}-1}$, see \eqref{schwarz1}.
	 \item We now modify the constant symbol of (a) such that it will induce a bounded Toeplitz operator $T_a:A^p(\Omega)\rightarrow A^p(\Omega)$. Let $a:\Omega\rightarrow \mathbb{C}$ be such that $|a(\xi)|\leq C|1-\overline{\varphi(\boldsymbol{\xi_m})}\varphi(\xi)|^t$ for all $\xi \in \Omega$ and $t> (2-p)(\boldsymbol{\alpha_m}-1)-2(p-1)>0$. Let $f\in A^p(\Omega)$. Changing variables as in \eqref{change} leads to
	 \begin{align*}
	 \left\Vert T_a(f)\right\Vert^p_{p,\Omega}&\leq C\int_{\mathbb{D}}|\psi'(z)|^{2-p}\left(\int_{\mathbb{D}}\frac{|f(\psi(w))|1-w|^t|\psi'(w)|}{|1-z\overline{w}|^2}dA(w)\right)^p dA(z)
	 \\&\leq C' \int_{\mathbb{D}}|\psi'(z)|^{2-p}|1-z|^{-t}\left(\int_{\mathbb{D}}\frac{|f(\psi(w))|1-w|^t|\psi'(w)|}{|1-z\overline{w}|^2}dA(w)\right)^p dA(z)
	  \\&\leq C'' \int_{\mathbb{D}}|\psi'(z)|^{2}|1-z|^{t(p-1)}|f(\psi(z))|^p
	  \leq C'''\left\Vert f\right\Vert^p_{p,\Omega},
	 \end{align*}
	 where the inner integral converges because of the factor $|1-w|^t$ (this is easy to check by applying Hölder's inequality). 
	 On the second line we have applied the boundedness of 
	 $P_{\mathbb{D}}^+$ on $L_W^p(\mathbb{D})$, where $W(z)=|\psi'(z)|^{2-p}|1-z|^{-t}$ (see Corollary \ref{unit} case $p\leq 2$). 
		\end{enumerate} 
\end{exmp}
In the next example (part (b)) we remark that even though the symbol has a compact support, it is possible that the Toeplitz operator acting on $A^p(\Omega)$ is not even mapping into $A^p(\Omega)$.
\begin{exmp}
	\label{e1}
	Let $p>4$ and let 
	$\Omega\subset \mathbb{C}$ be a polygon with a large enough (inward) corner, so that Bergman projection is unbounded. More precisely, let $\boldsymbol{\alpha_m}=\max_k(\alpha_k)>1+\frac{2}{p-2}$ and let $0<\alpha_k<1$ for all $k\neq \boldsymbol{m}$, i.e., except for the maximum angle the angles are outward.
	\begin{enumerate}[$(a)$]  
\item
We define a bounded symbol $a:\Omega\rightarrow \mathbb{C}$, $a(\xi):=\varphi'(\xi)(1-|\varphi(\xi)|^2)$. 
We claim that $T_a$ operating on $A^p(\Omega)$ is not even mapping into $A^p(\Omega)$.
Indeed, if we take $f\in A^p(\Omega)$ such that $f\equiv 1$ and change variables as in \eqref{e0}, we have
\begin{eqnarray}
	\nonumber\left\Vert T_a(f)\right\Vert^p_{p,\Omega}
	&=&\int_{\mathbb{D}}|\psi'(z)|^{2-p}\left|\int_{\mathbb{D}}\frac{1-|w|^2}{\left(1-z\overline{w}\right)^2}dA(w)\right|^p dA(z)
	\\\nonumber &\geq& \frac{C}{2^p}\int_{B(1,c)\cap\mathbb{D}}|1-z|^{(\boldsymbol{\alpha_{m}}-1)(2-p)} dA(z)
	=\infty,
\end{eqnarray}
where we have applied the mapping property \eqref{mapa} to the inner integral with variable $w$ and $B(1,c)$ is as defined in Example \ref{e0} (a).
	\item Now, let symbol $a:\Omega\rightarrow \mathbb{C}$ be 
	$a(\xi):=\varphi'(\xi)(\mbox{\Large$\chi$}_{B(0,R)}\circ \varphi)(\xi)$ for all $\xi\in\Omega$,
	where $0<R<1$ is the radius of the disk $B(0,R)$ and $\chi_{B(0,R)}$ is the characteristic function. Obviously there is $r>0$ such that $a(\xi)=0$ when $\text{dist}(\xi,\partial \Omega)<r$. However, $T_a$ operating on $A^p(\Omega)$ is not even mapping into $A^p(\Omega)$: Let $f\in A^p(\Omega)$ be the constant function $f\equiv 1$. 
	By changing variables $w=\varphi(\xi)$ and $z=\varphi(\lambda)$ as in \eqref{change}, we get 
	\begin{align}
	\nonumber\left\Vert T_a(f)\right\Vert^p_{p,\Omega}&= \int_{\mathbb{D}}|\psi'(z)|^{2-p}\left|\int_{\mathbb{D}}\frac{\mbox{\Large$\chi$}_{B(0,R)}(w)}{\left(1-z\overline{w}\right)^2}dA(w)\right|^p dA(z)
	\\\nonumber
	&= C_R\int_{\mathbb{D}}|\psi'(z)|^{2-p}dA(z)=\infty,
	\end{align}
	where the last integral diverges by \eqref{schwarz1} and by condition $\boldsymbol{\alpha_{m}}>1+\frac{2}{p-2}$. 
\end{enumerate}
\end{exmp}
However, some special symbols induce bounded Toeplitz operators.
In the next two examples we define bounded Toeplitz operators by modifying the symbol of Example \ref{e1} (a). 
This modification is not trivial, i.e., it is not trivial to find anti-analytic non-zero $L^1(\Omega)$-symbol that defines a bounded Toeplitz operator on $A^p(\Omega)$ in the case $p>4$.
\begin{exmp}
	Let $p>4$ and assume that $\Omega$ is as in Example \ref{e1}. Let us define (compare this to the symbol of Example \ref{e1} (a))
	\[
	a(\xi):=(\varphi'(\xi))^{1-2/p}(1-|\varphi(\xi)|^2)\left(1+|\varphi(\xi)|^2-2\varphi(\xi)\right),\;\;\;\xi\in \Omega,
	\]
	Thus ($w=\varphi(\xi)$) 	\[a(\psi(w))=(\psi'(w))^{2/p-1}(1-|w|^2)\left(1+|w|^2-2w\right), \;\;\;w\in \mathbb{D}.\]
	We claim that $T_a:A^p(\Omega)\rightarrow A^p(\Omega)$ is bounded.
	An easy computation gives (see Lemma \ref{bergman_prop})
	\[P_{\mathbb{D}}\left((1-|z|^2)(1+|z|^2-2z)z^n\right)=\frac{2z^n(1-z)}{n+3}\;\;\;\text{for all}\;\;\;n=0,1,...\]
	Let $f\in A^p(\Omega)$ and let $\sum_{n=0}^{\infty}a_nw^n$ be the Taylor series of the analytic function $(f\circ\psi)(\psi')^{2/p}$. Recall \eqref{schwarz1} and remark that $\alpha_k<1$ for all $k\neq \boldsymbol{m}.$ Hence $|\psi'(z)|\geq C|1-z|^{\boldsymbol{\alpha_{m}}-1}$ for all $z\in \mathbb{D}$. By changing variables $\xi=\psi(w)$ and $\lambda=\psi(z)$ and substituting $a(\psi(w))=(\psi'(w))^{2/p-1}(1-|w|^2)(1+|w|^2-2w)$ into \eqref{change} we get
	\begin{align*}
		\left\Vert T_a(f)\right\Vert^p_{p,\Omega}&=\int_{\mathbb{D}}|\psi'(z)|^{2-p}\left|\int_{\mathbb{D}}\frac{f(\psi(w))a(\psi(w))\psi'(w)}{(1-z\overline{w})^2}dA(w)\right|^p dA(z)
		\\
		&\leq C\int_{\mathbb{D}}|1-z|^{(\boldsymbol{\alpha_m}-1)(2-p)}\left|\int_{\mathbb{D}}\frac{f(\psi(w))(\psi'(w))^{2/p}(1-|w|^2)(1+|w|^2-2w)}{(1-z\overline{w})^2}dA(w)\right|^p dA(z)\\
		&= C\int_{\mathbb{D}}|1-z|^{(\boldsymbol{\alpha_m}-1)(2-p)}\left|\int_{\mathbb{D}}\frac{(\sum_{n=0}^{\infty}a_nw^n)(1-|w|^2)(1+|w|^2-2w)}{(1-z\overline{w})^2}dA(w)\right|^p dA(z)\\
		&=C'\int_{\mathbb{D}}|1-z|^{(\boldsymbol{\alpha_m}-1)(2-p)}\left|(1-z)\sum_{n=0}^{\infty}\frac{a_n}{n+3}z^n\right|^p dA(z)\\
		&=C'\int_{\mathbb{D}}|1-z|^{(\boldsymbol{\alpha_m}-1)(2-p)+p}\left|\sum_{n=0}^{\infty}\frac{a_n}{n+3}z^n\right|^p dA(z)\\
		&\leq C''\int_{\mathbb{D}}\left|\sum_{n=0}^{\infty}\frac{a_n}{n+3}z^n\right|^p dA(z) \\
		&\leq C'''\int_{\mathbb{D}}|f\circ \psi|^p|\psi'|^2dA=C'''\left\Vert f\right\Vert^p_{p,\Omega},
	\end{align*}
	where we have taken into account that $p>4$ and $\boldsymbol{\alpha_m}<2$, so that $(\boldsymbol{\alpha_m}-1)(2-p)+p>0$. The last inequality follows from \eqref{taylorcof}. This shows that $T_a$ is bounded.
\end{exmp}
\begin{exmp}
\label{e3}
Let $p>4$ and assume that $\Omega$ is as in Example \ref{e1}. Let us define (compare this to the symbol of Example \ref{e1} (a))
\[
a(\xi):=(\varphi'(\xi))^{1-2/p}(1-|\varphi(\xi)|^2)\left(\frac{\varphi(\xi)}{|\varphi(\xi)|}-|\varphi(\xi)|\right)^m,\;\;\;\xi\in \Omega,
\]
where $m\in\{2,3,...\}$. 
Hence in the unit disk ($w=\varphi(\xi)$)
\[
a(\psi(w))=(\psi'(w))^{2/p-1}(1-|w|^2)\left(\frac{w}{|w|}-|w|\right)^m,\;\;\;w\in \mathbb{D}.
\]
We claim that $T_{a}:A^p(\Omega)\rightarrow A^p(\Omega)$ is bounded. 
	Let $f \in A^p(\Omega)$.
We write $g:=(f\circ\psi)(\psi')^{2/p}$, so that $\left\Vert f\right\Vert_{p,\Omega}=\left\Vert g\right\Vert_{p,\mathbb{D}}$. Moreover, let $g(w)=\sum_{n=0}^{\infty}a_nw^n$, where $\sum_{n=0}^{\infty}a_nw^n$ is the Taylor series of $g.$ We make the change of variables $w=re^{i\theta}=\varphi(\xi)$ and $z=\varphi(\lambda)$ as in \eqref{change}:
\begin{align}
\label{e3estimate}
\nonumber\left\Vert T_a(f)\right\Vert^p_{p,\Omega}
&=\nonumber \int_{\mathbb{D}}|\psi'(z)|^{2-p}\left|\int_{\mathbb{D}}\frac{(f(\psi(w))(\psi'(w))^{2/p}(1-|w|^2)(w/|w|-|w|)^m}{\left(1-z\overline{w}\right)^2}dA(w)\right|^p dA(z)
\\\nonumber&\leq C\int_{\mathbb{D}}|1-z|^{(\boldsymbol{\alpha_{m}}-1)(2-p)}\left|\int_{0}^{2\pi}\int_{0}^{1}\frac{\big(\sum_{n=0}^{\infty}a_nr^ne^{in\theta}\big)(1-r^2)(e^{i\theta}-r)^mrdrd\theta/\pi}{\left(1-zre^{-i\theta}\right)^2}\right|^p dA(z)
\\&\leq C'\int_{\mathbb{D}}|1-z|^{(\boldsymbol{\alpha_{m}}-1)(2-p)}\left|\sum_{n=0}^{\infty}a_n\int_{0}^{1}(1-r^2)r^{n+1}\left(\int_{0}^{2\pi}\frac{e^{in\theta}(e^{i\theta}-r)^md\theta}{\left(1-zre^{-i\theta}\right)^2}\right)dr\right|^p dA(z).
\end{align}
Let us now take a closer look at the innermost integral with integration variable $\theta$. Integration by parts yields
\begin{align}
\nonumber\int_{0}^{2\pi}\frac{e^{in\theta}(e^{i\theta}-r)^md\theta}{\left(1-zre^{-i\theta}\right)^2}&=\int_{0}^{2\pi}\frac{D_{\theta}(e^{i(n+1)\theta}(e^{i\theta}-r)^m)d\theta}{zri(1-zre^{-i\theta})}
\\
\nonumber&=\int_{0}^{2\pi}\frac{(n+1)e^{i(n+1)\theta}(e^{i\theta}-r)^m+me^{i(n+2)\theta}(e^{i\theta}-r)^{m-1}}{zr(1-zre^{-i\theta})}d\theta
\\
\nonumber&=\frac{2\pi}{zr}(Sh_r)(zr)=\frac{2\pi}{zr}h_r(zr)
\\
\nonumber&=\frac{2\pi}{zr}\Big((n+1)(zr)^{n+1}(zr-r)^m+m(zr)^{n+2}(zr-r)^{m-1}\Big)
\\\nonumber
&=2\pi(z-1)^{m-1}\big((n+m+1)z^{n+1}-(n+1)z^n\big)r^{n+m},
\end{align}
where $S$ is the Cauchy-Szegö integral operator \eqref{szego} and $h_r:\overline{\mathbb{D}}\rightarrow \mathbb{C}$, $h_r(w):=(n+1)w^{n+1}(w-r)^m+mw^{n+2}(w-r)^{m-1}$ (see the numerator of the integrand on the second line). Note that $h_r$ is analytic on $\mathbb{D}$ and continuous on $\overline{\mathbb{D}}$ since $m$ is an integer $\geq 2$. Now, the factor $(z-1)^{m-1}$ on the last line is essential for the boundedness of the Toeplitz operator. We can continue from \eqref{e3estimate}
\begin{align}
\nonumber &\left\Vert T_a(f)\right\Vert^p_{p,\Omega}
\leq
C'\int_{\mathbb{D}}|1-z|^{(\boldsymbol{\alpha_{m}}-1)(2-p)+(m-1)p}\left|\sum_{n=0}^{\infty}a_n\big((n+m+1)z^{n+1}\right.\\
&\quad\quad\quad\quad\quad\quad\left.-(n+1)z^n\big)\int_{0}^{1}(1-r^2)r^{2n+m+1}dr\right|^p dA(z)\nonumber
\\&\quad\quad
\nonumber
\leq C''\int_{\mathbb{D}}\left|\sum_{n=0}^{\infty}a_n\big((n+m+1)z^{n+1}-(n+1)z^n\big)\int_{0}^{1}(1-r^2)r^{2n+m+1}dr\right|^p dA(z)\\
\nonumber&\quad\quad= C''\int_{\mathbb{D}}\left|\sum_{n=0}^{\infty}\frac{2(n+m+1)a_nz^{n+1}}{(2n+m+2)(2n+m+4)}-\sum_{n=0}^{\infty}\frac{2(n+1)a_nz^{n}}{(2n+m+2)(2n+m+4)}\right|^p dA(z)\\\nonumber
&\quad\quad\leq C'''\int_{\mathbb{D}}\left|\sum_{n=0}^{\infty}\frac{(n+m+1)a_nz^n}{(2n+m+2)(2n+m+4)}\right|^p dA(z)+C'''\int_{\mathbb{D}} \left|\sum_{n=0}^{\infty}\frac{(n+1)a_nz^{n}}{(2n+m+2)(2n+m+4)}\right|^p dA(z)\\
\nonumber
&\quad\quad\leq C''''\left\Vert g\right\Vert^p_{p,\mathbb{D}}=C''''\left\Vert f\right\Vert^p_{p,\Omega},
\end{align}
where we have applied the estimate $\left\Vert h_1+h_2\right\Vert^p_p\leq 2^p\text{max}(\left\Vert h_1\right\Vert^p_p,\left\Vert h_2\right\Vert^p_p)$ and on the second last line the relation \eqref{taylorcof}. To make sure that \eqref{taylorcof} works, write 
$h_1(z)=\sum_{n=0}^{\infty}b_nz^n$ with coefficients $b_n=(n+m+1)a_n/\big((2n+m+2)(2n+m+4)\big)$ and $h_2(z)=\sum_{n=0}^{\infty}c_nz^n$ with $c_n=(n+1)a_n/\big((2n+m+2)(2n+m+4)\big)$ and recall that $g(z)=\sum_{n=0}^{\infty}a_nz^n.$ We have shown that $T_a$ is bounded.
\end{exmp}
\section*{Acknowledgement}
The author would like to thank her supervisor Jari Taskinen (University of Helsinki) for his advice and suggestions.
\bibliography{citation_polygonal}{}

\begin{thebibliography}{10}

\bibitem{bekolle1}
{\sc B\'{e}koll\'{e}, D.}
\newblock In\'{e}galit\'{e}s \'{a} poids pour le projecteur de {B}ergman dans
  la boule unit\'{e} de $\mathbb{C}^n$.
\newblock {\em Studia Math. \emph{\textbf{LXXI}}\/} (1982), 305--323.

\bibitem{bekolle2}
{\sc B\'{e}koll\'{e}, D.}
\newblock Projections sur des espaces de fonctions holomorphes dans des domains
  planes.
\newblock {\em Can. J. Math. \emph{\textbf{XXXVIII}}}, 1 (1986), 127--157.

\bibitem{duren}
{\sc Duren, P., and Schuster, A.}
\newblock {\em Bergman Spaces}, vol.~100 of {\em Mathematical Surveys and
  Monographs}.
\newblock American Mathematical Society, Providence, RI, 2004.

\bibitem{hedenmalm}
{\sc Hedenmalm, H.}
\newblock The dual of a {B}ergman space on simply connected domains.
\newblock {\em J. Anal. Math. \textbf{88}}, 1 (2002), 311--335.

\bibitem{hutson}
{\sc Hutson, V., and Pym, J.~S.}
\newblock {\em Applications of Functional Analysis and Operator Theory},
  vol.~146 of {\em Mathematics in Science and Engineering}.
\newblock Academic Press Inc., London, 1980.

\bibitem{lanzani}
{\sc Lanzani, L., and Stein, E.~M.}
\newblock Szeg{\"o} and {B}ergman projections on non-smooth planar domains.
\newblock {\em J. Geom. Anal. \textbf{14}}, 1 (2004), 63--86.

\bibitem{mannersalo}
{\sc Mannersalo, P.}
\newblock Toeplitz operators with locally integrable symbols on {B}ergman
  spaces of bounded simply connected domains.
\newblock {\em Complex Var. Elliptic Equ. \textbf{61}}, 6 (2016), 854--874.

\bibitem{nehari}
{\sc Nehari, Z.}
\newblock {\em Conformal Mapping}.
\newblock McGraw-Hill Book Company, Inc., New York, 1952.

\bibitem{rattya}
{\sc Pel\'{a}ez, J.~{\'{A}}., and R{\"a}tty{\"a}, J.}
\newblock Two weight inequality for {B}ergman projection.
\newblock {\em J. Math. Pures Appl. \textbf{105}}, 1 (2016), 102--130.

\bibitem{pommerenke}
{\sc Pommerenke, C.}
\newblock {\em Boundary Behaviour of Conformal Maps}.
\newblock Springer-Verlag, Berlin, 1992.

\bibitem{shikhvatov}
{\sc Shikhvatov, A.~M.}
\newblock Spaces of analytic functions in a region with an angle.
\newblock {\em Math. Notes (translated from Mat. Zametki) \textbf{18}\/}
  (1975), 833--839.

\bibitem{solovev2}
{\sc Solov'ev, A.~A.}
\newblock ${L}^p$-estimates of integral operators associated with spaces of
  analytic and harmonic functions.
\newblock {\em Soviet Math. Dokl. \textbf{19}\/} (1978), 764--768.

\bibitem{solovev}
{\sc Solov'ev, A.~A.}
\newblock On the continuity of an integral operator with {B}ergman kernel in
  the ${L}^p$ space ({R}ussian).
\newblock {\em Vest. Leningrad Universiteta \textbf{19}\/} (1978), 77--80.

\bibitem{stein}
{\sc Stein, E.~M.}
\newblock {\em Singular Integrals and Differentiability Properties of
  Functions}.
\newblock Princeton University Press, Princeton, N.J., 1970.

\bibitem{taskinen3}
{\sc Taskinen, J.}
\newblock Regulated domains and {B}ergman type projections.
\newblock {\em Ann. Acad. Sci. Fenn. Math. \textbf{28}}, 1 (2003), 55--68.

\bibitem{taskinen4}
{\sc Taskinen, J.}
\newblock Note on the paper ''{R}egulated domains and {B}ergman type
  projections''.
\newblock {\em J. Funct. Spaces Appl. \textbf{2}}, 2 (2004), 97--106.

\bibitem{taskinen1}
{\sc Taskinen, J., and Virtanen, J.}
\newblock Toeplitz operators on {B}ergman spaces with locally integrable
  symbols.
\newblock {\em Rev. Mat. Iberoam. \textbf{26}}, 2 (2010), 693--706.

\bibitem{taskinen2}
{\sc Taskinen, J., and Virtanen, J.}
\newblock On generalized {T}oeplitz and little {H}ankel operators on {B}ergman
  spaces.
\newblock {\em Arch. Math. \textbf{110}}, 2 (2018), 155--166.

\bibitem{zhu}
{\sc Zhu, K.}
\newblock {\em Operator Theory in Function Spaces}, second~ed., vol.~138 of
  {\em Mathematical Surveys and Monographs}.
\newblock American Mathematical Society, Providence, RI, 2007.

\end{thebibliography}
\bibliographystyle{acm}
\medskip
\medskip
Paula Mannersalo\\
Department of Mathematics and Statistics\\
P.O. Box 68\\
FI-00014 University of Helsinki\\
Finland\\
E-mail: \url{paula.mannersalo@gmail.com}
\end{document}